\theoremstyle{plain}
\newtheorem{theorem}{Theorem}
\newtheorem{definition}[theorem]{Definition}
\newtheorem{lemma}[theorem]{Lemma}
\newtheorem*{theorem*}{Theorem} 
\newtheorem*{corollary*}{Corollary}
\newtheorem*{proposition*}{Proposition}
\newtheorem*{problem*}{Problem}
\newtheorem*{acknowledgement*}{Acknowledgement}
\newtheorem{theoremA}{Theorem} 
\newtheorem{lemmaA}[theoremA]{Lemma}
\theoremstyle{definition}
\newtheorem{example}[theorem]{Example}
\newtheorem*{fact}{Fact}
\newcommand{\vol}{\mathrm{vol}}
\newcommand{\inte}{\mathrm{int}}
\renewcommand{\div}{\operatorname{div}}
\newcommand{\Hess}{\operatorname{Hess}}
\newcommand{\Lip}{\mathrm{Lip}}
\newcommand{\rr}{\mathbb{R}}
\renewcommand{\ss}{\mathbb{S}}
\newcommand{\nn}{\mathbb{N}}
\newcommand{\bb}{\mathbb{B}}
\newcommand{\ric}{\operatorname{Ric}}
\newcommand{\supp}{\operatorname{supp}}
\newcommand{\<}{\langle}
\renewcommand{\>}{\rangle}
\newcommand{\dist}{\mathrm{dist}}
\renewcommand{\d}{\delta}
\newcommand{\vp}{\varphi}
\newcommand{\GS}{\Sigma}
\newcommand{\GO}{\Omega}
\newcommand{\GG}{\Gamma}
\newcommand{\CC}{\mathcal{C}}
\newcommand{\CD}{\mathcal{D}}
\newcommand{\CE}{\mathcal{E}}
\newcommand{\CF}{\mathcal{F}}
\newcommand{\CK}{\mathcal{K}}
\newcommand{\CN}{\mathcal{N}}
\newcommand{\CS}{\mathcal{S}}
\newcommand{\CO}{\mathcal{O}}
\newcommand{\CT}{\mathcal{T}}
\newcommand{\C}{\mathscr{C}}
\renewcommand{\L}{\mathscr{L}}
\renewcommand{\P}{\mathscr{P}}
\newcommand{\U}{\mathscr{U}}
\newcommand{\W}{\mathscr{W}}
\begin{document}


\title[The Frankel property for self-shrinkers via elliptic PDE's]{The Frankel property for self-shrinkers from the viewpoint of elliptic PDE's}

\begin{abstract}
 We show that two properly embedded self-shrinkers in Euclidean space that are sufficiently separated at infinity must intersect at a finite point. The proof is based on a localized version of the Reilly formula applied to a suitable  $f$-harmonic function with controlled gradient. In the  immersed case, a new direct proof of the generalized half-space property is also presented.
\end{abstract}

\subjclass[2010]{53C42, 53C21}
%

\author[Debora Impera]{Debora Impera}
\address[Debora Impera]{Dipartimento di Scienze Matematiche "Giuseppe Luigi Lagrange", Politecnico di Torino, Corso Duca degli Abruzzi 24, Torino, Italy, I-10129}
\email{debora.impera@polito.it}

\author[Stefano Pigola]{Stefano Pigola}
\address[Stefano Pigola]{Universit\`a degli Studi di Milano-Bicocca\\ Dipartimento di Matematica e Applicazioni \\ Via Cozzi 55, 20126 Milano - ITALY}
\email{stefano.pigola@unimib.it}

\author[Michele Rimoldi]{Michele Rimoldi}
\address[Michele Rimoldi]{Dipartimento di Scienze Matematiche "Giuseppe Luigi Lagrange", Politecnico di Torino, Corso Duca degli Abruzzi 24, Torino, Italy, I-10129}
\email{michele.rimoldi@polito.it}

\maketitle



\section{Basic notation and purpose of the paper}

\subsection{Weighted manifolds}
By a {\it weighted manifold}  (also called {\it manifold with density}, or {\it smooth metric measure space}) we mean a triad
\[
M_{f} = (M,g,dv_{f})
\]
where $(M,g)$ is an $m$-dimensional Riemannian manifold with volume element $dv$, $dv_{f} = e^{-f}dv$ and $f:M \to \rr$ is a smooth weight function. The (obviously intrisic) geometric analysis of $M_{f}$ is related to bounds on its {\it Bakry-\'Emery Ricci curvature}
\[
\ric_{f} :=  \ric +  \Hess(f)
\]
combined with the analysis of its {\it weighted Laplacian}
\[
\Delta_{f} u = \div_{f} \nabla u  = \Delta u - g(\nabla f,\nabla u)
\]
where the {\it weighted divergence} is the operator
\[
\div_{f} X = e^{f} \div (e^{-f} X).
\]
An important example of weighted manifold is represented by the {\it Gaussian space}
\[
\rr^{m+1}_{f} = (\rr^{m+1} , \<\cdot,\cdot\>, e^{-\frac{1}{2}|x|^{2}}dx).
\]
Since the weight function is $f(x) = \frac{1}{2}|x|^{2}$ we have that the Bakry-\'Emery Ricci curvature is 
\[
 \ric^{\rr^{m+1}}_{f}  \equiv 1,
\]
hence $\rr^{m+1}_{f}$ is called a {\it shrinking Ricci soliton}. Moreover, the weighted Laplacian is the {\it Ornstein-Uhlenbeck} operator
\[
\Delta_{f} u = \Delta  u- \< \nabla u, x\>.
\]

\subsection{Self shrinkers of the MCF}
Given an isometrically immersed hypersurface in the weighted manifold $M^{m+1}_f$
\[
x : \GS^{m} \to M^{m+1}_{f},
\]
we introduce the corresponding {\it weighted mean curvature vector field} of the immersion as
\[
\mathbf{H}_{f} := \mathbf{H} + (\nabla f) ^{\perp},
\]
where we are using the convention $\mathbf{H}=\mathrm{tr}_{\Sigma}\mathbf{A}$, $\mathbf{A}$ being the vector-valued second fundamental form. Here $(\cdot)^{\bot}$ denotes the orthogonal projection on the normal bundle of $\GS$. We say that $x: \GS^{m} \to M^{m+1}_{f}$ is {\it $f$-minimal} if $\mathbf{H}_{f} \equiv 0$.\smallskip

A {\it self-shrinker} of the mean curvature flow (MCF) in the Euclidean space $\rr^{m+1}$  is an $f$-minimal hypersurface of the Gaussian space $\rr^{m+1}_{f}$. This is completely equivalent to require that the mean curvature vector field satisfies the equation
\begin{equation}\label{SS}
x^{\bot}=-\mathbf{H}.
\end{equation}

\subsection{Intrinsic {\it vs} extrinsic weighted structure}
Clearly, the self-shrinker $\GS^{m}$ inherits the weighted structure of the ambient space. Thus, intrinsically, we can consider the manifold with density
\[
\GS^{m}_{\tilde f} =(\GS^{m},g=x^{\ast}\<\cdot,\cdot\>,dv_{\tilde f})
\]
where $\tilde f = f \circ x$. It is customary to drop the ``tilde'' in the weight function and to write $\GS^{m}_{f}$. An important relation between the (intrinsic) Bakry-\'Emery Ricci tensor of $\GS^{m}_{f}$ and the extrinsic geometry of the $f$-minimal hypersurface $\GS^{m}$ comes from the Gauss equations. Indeed, it was observed in \cite{Ri1} that
\[
\ric^{\GS}_{f} \geq 1 - | \mathbf{A}|^{2}.
\]
As in the usual minimal surface theory, another important link between the intrinsic weighted geometry and the extrinsic properties of the self-shrinker comes from the $f$-Laplacian of the immersion. We have the following identity (see e.g. \cite{CoMi})
\begin{equation}
\Delta_{f}^{\Sigma}x = -x \label{Deltaf-immersion1}
\end{equation}
and its direct consequence
\begin{equation*}\label{Deltaf-immersion2}
\Delta_{f}^{\Sigma}|x|^2 = 2(m-|x|^2).
\end{equation*}

\subsection{Properly immersed self-shrinkers}
We are mainly interested in {\it properly immersed} self-shrinkers. A remarkable result by Q. Ding and Y.L. Xin, \cite{DiXi}, states that a properly immersed self-shrinker $x: \GS^{m} \to \rr^{m+1}_{f}$ has extrinsic  Euclidean volume growth 
\begin{equation}\label{polygrowth}
\left\vert \GS^{m} \cap \bb^{m+1}_{R} \right\vert= \CO(R^{m}) 
\end{equation}
and, hence, finite weighted volume
\begin{equation*}\label{finitevolf}
\vol_{f}(\GS^{m}) <+\infty.
\end{equation*}
This latter condition implies that the complete weighted manifold $\GS^{m}_{f}$ is {\it $f$-parabolic}, i.e., for any $u \in \C^{0}(\GS)\cap \W^{1,2}_{\mathrm{loc}}(\GS)$,
\begin{equation*}\label{fparab}
\begin{array}{ccc}
\begin{cases}
 \Delta_{f}^{\Sigma} u \geq 0 \\
 \sup_{\GS} u <+\infty 
\end{cases}
&
\Rightarrow u \equiv \mathrm{const.}
\end{array}
\end{equation*}
It is well known that parabolicity is a kind of compactness from several viewpoints, including global Stokes theorems and maximum principles, as it is already visible from the above definition.

\subsection{The Frankel property}
In many instances, properly immersed self-shrinkers behave like compact minimal hypersurfaces of the standard sphere. In this latter setting, it is well known that any two closed minimal immersed hypersurfaces must intersect. Actually, the ambient space can be generalized to a compact Riemannian manifold with strictly positive Ricci curvature. This is called the {\it Frankel property} after the celebrated paper by T. Frankel, \cite{Fr}. The original proof gives an estimate of  the (positive) distance between non-intersecting compact hypersurfaces in terms of their mean curvatures and it is based on the second variation of length along a  geodesic realizing the distance. New arguments and further extensions of the Frankel property to other geometric contexts are now available. Most notably, and relevantly for the development of the present paper, we mention \cite{PeWi} by P. Petersen and F. Wilhelm, where the maximum principle for the hypersurface-distance function is used, and 
\cite{FL} by A. Fraser and M. M.-C. Li, where the Frankel property is investigated in the setting of compact embedded free boundary minimal surfaces in a manifold with nonnegative Ricci curvature. We shall comment on these works later on, in Section \ref{section-controlled}. It is natural to ask:
\begin{problem*}
 Let $x_{j}:\GS^{m}_{j}\to \rr^{m+1}_{f}$, $j=1,2$, be complete, properly immersed self-shrinkers. To what extent is it true that $x_{1}(\GS^{m}_{1}) \cap x_{2}(\GS^{m}_{2}) \not=\emptyset$?
\end{problem*}
Starting from the work by G. Wei and W. Wylie, \cite{WeWy}, where the case of compact hypersurfaces is considered (actually what is really needed is that the positive distance between the hypersurfaces is realized at finite points), few partial positive answers to this question appeared in the literature. They are mostly related to (generalized) half-space properties of the shrinkers; see Section \ref{section-halfspace}.\smallskip

During the summer school ``Geometric Analysis on Riemannian and Singular Metric Measure Spaces'', held in Como in July 2016, \url{http://arms.lakecomoschool.org}, Prof. Tom Ilmanen suggested (and kindly outlined the main steps of the parabolic proof) that the Frankel property can be proved in the general framework of the motion by level-sets in Euclidean space.

\subsection{Purpose of the paper}
In this paper, by taking the purely elliptic viewpoint, we give the first general result in the literature about the validity of the (smooth) properly embedded Frankel property for self-shrinkers of the MCF. To this end, we shall collect results and techniques that, we feel, will be interesting also in other settings. The main contributions are the following:
\begin{itemize}
 \item [-]  With a new direct argument, based on the potential theory of weighted manifolds, we recover the main result of \cite{CaEs}, namely we show that a properly immersed self-shrinker cannot be located neither inside nor outside a self-shrinker cylinder; see Theorem \ref{TheoremExtIntCyl} in Section \ref{section-halfspace}.
 \item [-] We apply a localized version of the Reilly formula to a suitable  $f$-harmonic function with controlled gradient in order to show that two properly embedded self-shrinkers that are sufficiently separated at infinity must intersect at a finite point; see Theorem \ref{th-positivedistance} and Theorem \ref{theorem-intersection} in Section \ref{section-controlled}.
\end{itemize}

\section{Immersed shrinkers: half-space type properties}\label{section-halfspace}
After the celebrated paper by D. Hoffman and W. Meeks, \cite{HoMe}, one says that the {\it (weak) half-space property} holds for a certain family $\CF$ of immersed hypersurfaces, if any $\GS \in \CF$ cannot be confined in certain half-spaces unless it is a totally geodesic hyperplane.\smallskip

The first half-space property for Euclidean properly immersed self-shrinkers of the MCF was observed in \cite[Theorem 3]{PiRi}.
\begin{theorem*}
 Let $x: \GS^{m}\to\rr^{m+1}$ be a properly immersed self-shrinker. If $x(\GS)$ is contained in a closed half-space of $\rr^{m+1}$ determined by a hyperplane $\Pi$ passing through the origin, then $x(\GS) = \Pi$.
\end{theorem*}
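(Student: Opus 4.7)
The plan is to reduce the half-space property to an application of $f$-parabolicity to the height function relative to $\Pi$. Write $\Pi = \{ y \in \rr^{m+1} : \< y, \nu\> = 0\}$ for some unit vector $\nu$, and assume without loss of generality that $x(\GS) \subset \{\< y,\nu\> \geq 0\}$. Consider the scalar function $u : \GS \to \rr$ defined by $u = \< x, \nu\>$. By hypothesis, $u \geq 0$ on $\GS$.

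The key computation is to apply the identity \eqref{Deltaf-immersion1}, $\Delta_{f}^{\Sigma} x = -x$, componentwise against the constant vector $\nu$. This gives
\[
\Delta_{f}^{\Sigma} u = \< \Delta_{f}^{\Sigma} x, \nu \> = -\< x,\nu\> = -u \leq 0 \quad \text{on } \GS.
\]
Thus $-u$ is $f$-subharmonic on $\GS_{f}$ and bounded above by $0$. Since $x:\GS \to \rr^{m+1}$ is properly immersed, the Ding--Xin volume estimate \eqref{polygrowth} implies $\vol_{f}(\GS) < +\infty$, and therefore $\GS_{f}$ is $f$-parabolic. The characterising property of $f$-parabolicity forces $-u$ (and hence $u$) to be constant, say $u \equiv c \geq 0$. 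Plugging back into $\Delta_{f}^{\Sigma} u = -u$ yields $0 = -c$, so $c = 0$. This proves $x(\GS) \subset \Pi$.

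To upgrade this inclusion to equality, observe that since $\dim \GS = m = \dim \Pi$ and $x$ is an immersion, $x(\GS)$ is open in $\Pi$; by properness, $x(\GS)$ is also closed in $\rr^{m+1}$, hence closed in $\Pi$. Connectedness of $\Pi$ and of each component of $\GS$ then gives $x(\GS) = \Pi$.

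The only potential technical issue is ensuring that $f$-parabolicity is available for the weighted structure induced on $\GS$, but this is precisely the chain of implications \eqref{polygrowth} $\Rightarrow \vol_{f}(\GS) < \infty \Rightarrow f$-parabolicity recalled in the excerpt; so there is no real obstacle beyond the identification of $u = \< x,\nu\>$ as the right test function and the observation that, thanks to \eqref{Deltaf-immersion1}, it is a genuine $f$-harmonic (in fact $f$-eigen-) function of the intrinsic weighted geometry.
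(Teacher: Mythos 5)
Your proof is correct and follows essentially the same route the paper attributes to \cite{PiRi}: testing the coordinate/height function $u=\<x,\nu\>$ against the identity $\Delta_{f}^{\Sigma}x=-x$ and invoking the $f$-parabolicity of a properly immersed self-shrinker (which the paper derives from the Ding--Xin volume estimate), exactly as in the proof of Theorem \ref{TheoremExtIntCyl}. The final upgrade from $x(\GS)\subseteq\Pi$ to $x(\GS)=\Pi$ via openness (equidimensional immersion) plus closedness (properness) is also sound, so no gap remains.
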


In the same paper, the authors started the investigation on the possible regions where a properly immersed self-shrinker (with various geometric assumptions) can be located. The proof of the half-space property proposed in \cite{PiRi} is a simple application of the $f$-parabolicity of the self-shrinkers. Soon after, P. Cavalcante and J. Espinar, \cite[Theorem 1.1]{CaEs}, obtained the same result  using the touching principle, following closely the original proof by Hoffman-Meeks. The role of the catenoid is now played by a rotational self-shrinker discovered by S. Kleene and N. M\o ller \cite{KlMo}. With a different analytic technique, based on a perturbation argument that exploits the instability of the Jacobi operator associated to a cylinder, they were also able to replace the half-space by the interior or the exterior region of a cylindrical self-shrinker; \cite[Theorem 1.2 and Theorem 1.3]{CaEs}\smallskip

In the next result we use potential theoretic arguments to recover \cite[Theorem 1.2 and Theorem 1.3]{CaEs} in a very succinct way. 
\begin{theoremA}\label{TheoremExtIntCyl}
Let $x:\Sigma^m\to\mathbb{R}^{m+1}$ be a complete properly immersed self-shrinker. If $x(\Sigma)$ is  confined inside either one of the connected regions of $\rr^{m+1}$ determined by the self-shrinker cylinder $\mathbb{S}^{k}_{\sqrt{k}}\times\mathbb{R}^{m-k}\subset\mathbb{R}^{m+1}$, $1\leq k\leq m-1$, then $x(\GS) =\mathbb{S}^{k}_{\sqrt{k}}\times\mathbb{R}^{m-k}$. 
\end{theoremA}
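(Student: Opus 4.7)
The plan is to exhibit on $\Sigma$ an $f$-subharmonic function that is bounded above and then to invoke the $f$-parabolicity of properly immersed self-shrinkers, which is guaranteed by the Ding--Xin volume estimate \eqref{polygrowth}. Splitting $\rr^{m+1} = \rr^{k+1}_{y} \times \rr^{m-k}_{z}$, the self-shrinker cylinder becomes the level set $\{|y|^{2} = k\}$, and I would take $u := |y|^{2}$ restricted to $\Sigma$ as the building block. Applying the identity $\Delta_{f}^{\Sigma} x = -x$ of \eqref{Deltaf-immersion1} to each coordinate $y^{i}$ and summing, one finds
\[
\Delta_{f}^{\Sigma} u = 2\bigl(k+1 - u - |\nu^{y}|^{2}\bigr), \qquad |\nabla^{\Sigma} u|^{2} = 4\bigl(u - \langle y, \nu\rangle^{2}\bigr),
\]
where $\nu$ is a local unit normal to $\Sigma$ and $\nu^{y}$ is its projection onto the $y$-factor. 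Note also that, since $y$ lies entirely in the $y$-subspace, Cauchy--Schwarz gives $\langle y, \nu\rangle^{2} = \langle y, \nu^{y}\rangle^{2} \leq u\,|\nu^{y}|^{2}$.

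In the interior case $u \leq k$, the identity together with $|\nu^{y}|^{2} \leq 1$ yields $\Delta_{f}^{\Sigma} u \geq 2(k-u) \geq 0$. Thus $u$ is a bounded, $f$-subharmonic function on the $f$-parabolic manifold $\Sigma_{f}$, so $u \equiv c^{2}$ is constant. The identity $\nabla^{\Sigma} u \equiv 0$ then forces $y$ to be normal to $\Sigma$, making $\Sigma$ an open subset of the Euclidean cylinder $\{|y| = c\}$. A connected, complete, $m$-dimensional hypersurface inside such a cylinder must coincide with it, and the cylinder satisfies $x^{\perp} = -\mathbf{H}$ precisely when $c = \sqrt{k}$, proving the claim in this case (the alternative $c = 0$ being excluded on dimensional grounds).

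The exterior case $u \geq k$ is not immediately reachable with $u$, which is no longer bounded above, so I would replace it with $w := -\log u$. This is well defined (since $u \geq k > 0$) and bounded above by $-\log k$. A routine computation gives
\[
u^{2}\,\Delta_{f}^{\Sigma} \log u = 2u\bigl(k+1 - u - |\nu^{y}|^{2}\bigr) - 4\bigl(u - \langle y, \nu\rangle^{2}\bigr),
\]
and the bound $\langle y, \nu\rangle^{2} \leq u|\nu^{y}|^{2}$ estimates the right-hand side from above by $2u\,[(k-1) - u + |\nu^{y}|^{2}]$, which is non-positive because $u \geq k \geq k-1 + |\nu^{y}|^{2}$ throughout the exterior region. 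Hence $\Delta_{f}^{\Sigma} w \geq 0$, and $w$ (and thus $u$) is constant by $f$-parabolicity; the same geometric argument as in the interior case then identifies $\Sigma$ with the shrinking cylinder of radius $\sqrt{k}$.

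The only non-routine step I expect is the choice of auxiliary function in the exterior region: neither $u$ nor $-u$ is simultaneously bounded above and $f$-subharmonic there, and the logarithm is precisely what produces the correct sign cancellation after Cauchy--Schwarz is invoked. The rest is bookkeeping around the explicit formula for $\Delta_{f}^{\Sigma}|y|^{2}$, which handles all cylinder dimensions $1 \leq k \leq m-1$ uniformly and sidesteps the Kleene--M\"oller catenoid-type tangential comparisons used in \cite{CaEs}.
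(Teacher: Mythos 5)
Your argument is correct, and its skeleton is the paper's: the same test function $u=\sum_{A=1}^{k+1}x_A^2$, the same identities for $\Delta_f^{\Sigma}u$ and $|\nabla^{\Sigma}u|^2$ coming from \eqref{Deltaf-immersion1}, and the same use of $f$-parabolicity of properly immersed shrinkers via the Ding--Xin volume bound. You deviate in two places, both legitimately. First, in the exterior region the paper works with $\sqrt{u}$, showing via \eqref{EqfLaplsqrtu} that it is $f$-superharmonic and bounded below (equivalently $-\sqrt{u}$ is $f$-subharmonic and bounded above), whereas you use $-\log u$; both choices exploit exactly the same Cauchy--Schwarz cancellation $\langle y,\nu\rangle^2\le u|\nu^y|^2$, so your remark that the logarithm is ``precisely'' what is needed overstates its role --- it is one of several admissible concave reparametrizations. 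Second, and more interestingly, your endgame differs: after constancy of $u\equiv c^2$ you observe that the immersion becomes a local isometry into the Euclidean cylinder of radius $c$, hence by completeness a surjective covering, and then the shrinker equation \eqref{SS} applied to that cylinder forces $c=\sqrt{k}$ (with $c=0$ ruled out dimensionally). The paper instead concludes by quoting Wei--Wylie's Frankel-type theorem for realized distance, and only in a remark gives a covering argument restricted to $k\ge 2$ because it asks for simple connectivity; your version needs only surjectivity of the covering, so it is self-contained and handles all $1\le k\le m-1$ uniformly --- a small but genuine simplification of the paper's final step.
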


\begin{proof}
Letting $\left\{e_{A}\right\}_{A=1}^{m+1}$ be an orthonormal frame of $\mathbb{R}^{m+1}$, we will denote the coordinate functions of $x$ by $x_A:=\left\langle x, e_{A}\right\rangle$ and by $\mathcal{N}$ the chosen (local) Gauss map of the immersion. Fix $1\leq k\leq m-1$ and consider a self-shrinker cylinder
\[
\mathcal{C}^{k}_{\sqrt{k}}:=\mathbb{S}^{k}_{\sqrt{k}}\times\mathbb{R}^{m-k}\subset\mathbb{R}^{m+1}.
\]
Let $u:\Sigma\to\mathbb{R}$ be the smooth function defined by $$u=\sum_{A=1}^{k+1}x_{A}^2.$$ Clearly, for every $p \in \GS$,
\[
u(p) =  \left( \overline{\dist}_{\rr^{m+1}}\left(x(p), \mathcal{C}^{k}_{\sqrt{k}}\right) + \sqrt{k} \right)^{2},
\]
where $\overline{\dist}_{\rr^{m+1}}\left(x(p), \CC^{k}_{\sqrt{k}}\right)$ denotes the signed distance. Here, we are using the convention that such a distance is negative inside $\CC^{k}_{\sqrt{k}}$.
Note that
\begin{align}\label{EqGradu}
\frac{1}{4}|\nabla^{\Sigma} u|^2 &=  u-\left(\sum_{A=1}^{k+1}x_A\left\langle e_A, \CN\right\rangle\right)^2\\ \nonumber
&= u-\left\langle\overline{x}, \overline{\CN}\right\rangle^2\\ \nonumber
&= u\left(1-\left\langle \frac{\overline{x}}{|\overline{x}|}, \overline{\CN}\right\rangle^2\right), \nonumber
\end{align}
where we are using the notation $\overline{x}=\sum_{A=1}^{k+1}x_{A}e_{A}$ and $\overline{\CN}=\sum_{A=1}^{k+1}\left\langle e_{A}, \CN\right\rangle e_{A}$. By Equation \eqref{Deltaf-immersion1}, we have that
\begin{eqnarray*}
\frac{1}{2}\Delta_{f}^{\Sigma}x_{A}^{2}&=& x_{A}\Delta_{f}^{\Sigma}x_{A}+ |\nabla^{\Sigma} x_{A}|^2\\
&=&- x_{A}^2+ |e_{A}^{T}|^2.
\end{eqnarray*}
Hence,
\begin{eqnarray}\label{EqfLaplu}
\frac{1}{2}\Delta_{f}^{\Sigma}u&=&  \sum_{A=1}^{k+1}|e_{A}^{T}|^2-u\\
&=& k+1-\sum_{A=1}^{k+1}\left\langle e_A, \CN\right\rangle^2-u \nonumber\\ 
&=& k+1 - |\overline \CN |^{2} -u. \nonumber
\end{eqnarray}
A direct consequence of \eqref{EqfLaplu} is that
\begin{align}\label{EqFLaplu1}
\frac{1}{2}\Delta_{f}^{\Sigma}u  \geq k-u.
\end{align}
On the other hand, using \eqref{EqGradu} and \eqref{EqfLaplu} we deduce the estimate 
\begin{align}
\Delta_{f}^{\Sigma}\sqrt{u}=&\frac{\Delta_{f}^{\Sigma}u}{2\sqrt{u}}-\frac{|\nabla^{\Sigma} u|^2}{4u^{\frac{3}{2}}}\label{EqfLaplsqrtu}\\
=&\frac{k-u}{\sqrt{u}}+\frac{\left\langle\frac{\overline{x}}{|\overline{x}|}, \overline{\CN} \right \rangle^2
-|\overline \CN |^{2}}{\sqrt{u}\nonumber}\\
\leq& \frac{k-u}{\sqrt{u}}\nonumber.
\end{align}

Assume now that $x(\Sigma)$ is confined in the closed exterior region determined by the self-shrinker cylinder $\CC^{k}_{\sqrt{k}}\subset\mathbb{R}^{m+1}$. Thus, $u \geq k$ and, by inequality \eqref{EqfLaplsqrtu}, $\sqrt{u}$ is $f$-superharmonic. On the other hand, since $\Sigma$ is properly immersed, $\Sigma_{f}$ is parabolic in the sense of \eqref{fparab}. It follows that $\sqrt{u} \equiv C$ for some constant $C \geq \sqrt{k}$ and using this information into \eqref{EqfLaplsqrtu} yields that, in fact, $u \equiv k$, i.e., $x(\GS) \subseteq \CC^{k}_{\sqrt{k}}$. The desired equality now follows by geodesic completeness.\smallskip

Similarly, suppose that $x(\Sigma)$ is confined inside the solid cylinder bounded by the self-shrinker $\CC^{k}_{\sqrt{k}}\subseteq\mathbb{R}^{m+1}$. Then, we see from \eqref{EqFLaplu1} that the function $u$ is $f$-subharmonic. Since $u \leq k$ it follows that $u$ must be constant by $f$-parabolicity. Whence, reasoning as in the previous case, we reach once more  the conclusion that $x(\GS) = \CC^{k}_{\sqrt{k}}$.
\end{proof}

\section{Embedded shrinkers with controlled asymptotic distance: main results}\label{section-controlled}

As we have alluded to in the introduction, a new approach to the traditional Frankel property that could be relevant in our setting was proposed by Petersen-Wilhelm in  \cite{PeWi}. As all of the other proofs of Frankel type properties, it proceeds by contradiction, assuming that two properly embedded minimal hypersurfaces $\GS_{1},\GS_{2}$ do not intersect. Then, one considers the function $u = r_{1}+r_{2}$ on the open set $\GO$ bounded by $\GS_{1}$ and $\GS_{2}$, where $r_{j}(x) = \dist(x,\GS_{j})$, $j=1,2$. At points $x$ where the normal exponential map $\exp_{\GS_{j}}^{\perp}$ is a diffeomorphism onto its image, and denoting with $\gamma_{j}$ the geodesic segment connecting $\GS_{j}$ with $x$, from the Riccati equation one obtains
\[
\frac{d}{dt} (\Delta r_{j} \circ\gamma_{j}(t)) = -1 - | \Hess(r_{j})|_{\gamma_{j}(t)} |^{2} \leq -1,
\]
which, once integrated, gives that $u$ satisfies, in the barrier sense of Calabi,
\[
\Delta u \leq - u, \text{ on }\GO.
\]
Now, if $\dist(\GS_{1},\GS_{2})>0$ is attained at some pair of points $(p,q) \in \GS_{1}\times \GS_{2}$ a segment $\gamma_{1}=\gamma_{2}$ connecting $p$ and $q$ is a curve of interior minima for $u$, contradicting the maximum principle. Similar arguments can be used when $\GS_{1},\GS_{2}$ are compact $f$-minimal hypersurfaces in the Gaussian space up to replacing the Laplace-Beltrami operator with the corresponding weighted Laplacian $\Delta_{f}$; see \cite[Theorem 7.3, Theorem 7.4]{WeWy}. However, if $\inf u$ is not achieved, i.e., $\dist(\GS_{1},\GS_{2})$ is either zero or realized at infinity, things are much more subtle. Since $\GO$ is an $f$-parabolic manifold with boundary, a natural attempt is to apply global forms of the maximum principle. Unfortunately, a formal computation suggests that the normal derivative of $u$ in the exterior direction  $\eta$ satisfies, in a suitable weak sense, $\partial_{\eta} u \leq 0$ on $\partial \GO$. Thus, it is unlikely that the potential theory for parabolic manifolds with boundary  developed in \cite{IPS} could be applicable directly to $u$ or to variants thereof.

In the  recent paper \cite{FL}, A. Fraser and M. Li proved that the Frankel property holds for compact embedded free boundary minimal surfaces in a compact manifold with non-negative Ricci curvature. They supplied two different arguments: the first one is a small variation of the original proof by Frankel whereas the second one, to the best of our knowledge, is completely new. It relies on the Reilly's formula applied to a harmonic function that separates the two surfaces. Inspired by this latter proof we shall obtain that if two properly embedded self-shrinkers are separated enough at infinity then they must intersect at some finite point. To make this claim more precise, we first observe that  two properly embedded shrinkers cannot be a positive distance apart.

\begin{theoremA}\label{th-positivedistance}
Let $\GS^{m}_{1},\GS^{m}_{2}$ be properly embedded $f$-minimal hypersurfaces inside the complete weighted manifold $M^{m+1}_{f}$ with $\ric_{f} \geq K^2>0$. Assume that each of $\GS_{1}$ and $\GS_{2}$ separates $M_{f}$. Then $\GS^{m}_{1}$ can not be a positive distance apart from $\GS^{m}_{2}$.
\end{theoremA}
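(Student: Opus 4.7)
The plan is to adapt the Fraser-Li strategy to the weighted setting via a separating $f$-harmonic function. Assume by contradiction that $d:=\dist_M(\GS_1,\GS_2)>0$. The separation assumption on each $\GS_i$ produces a well-defined open connected region $\GO\subset M$ with $\GS_1\cup\GS_2\subset\partial\GO$: namely, the component of $M\setminus(\GS_1\cup\GS_2)$ having nontrivial boundary on both hypersurfaces (which exists because $\GS_2$ lies on a definite side of $\GS_1$ and conversely). Since $\ric_f\geq K^2>0$, a weighted Myers-type theorem of Morgan yields $\vol_f(M)<\infty$, hence $\vol_f(\GO)<\infty$.

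On $\GO$ I would construct an $f$-harmonic separator $u$ satisfying $u|_{\GS_1}=0$, $u|_{\GS_2}=1$ and $0\leq u\leq 1$, solving the Dirichlet problem by exhaustion: on $\GO_n=\GO\cap B_n(p_0)$ solve $\Delta_f u_n=0$ with the prescribed values on $\partial\GO\cap B_n$ and suitable interpolating values on the artificial boundary $\partial B_n\cap\GO$, then pass to the limit via standard elliptic theory. The maximum principle gives $0\leq u\leq 1$, and interior $f$-gradient estimates (of Cheng-Yau type, available thanks to $\ric_f\geq K^2$) combined with $\vol_f(\GO)<\infty$ guarantee $|\nabla u|^{2}\in L^{1}_{f}(\GO)$.

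The key observation concerns the behavior of $u$ along $\partial\GO$. Let $\nu$ denote the outward unit normal to $\GO$ along $\GS_i$. Decomposing
\[
\Delta_{f} u = \Delta_{f}^{\GS_{i}} u + \Hess u(\nu,\nu) + H_{f}\,\partial_{\nu} u ,
\]
and using $u|_{\GS_{i}}=\mathrm{const}$, $\Delta_{f} u=0$, and the $f$-minimality $H_{f}\equiv 0$ of $\GS_{i}$, we deduce $\Hess u(\nu,\nu)\equiv 0$ on $\GS_{i}$. Since $\nabla u=(\partial_{\nu}u)\,\nu$ along $\GS_{i}$, it follows that $\partial_{\nu}|\nabla u|^{2}=2\Hess u(\nu,\nabla u)=2(\partial_{\nu}u)\Hess u(\nu,\nu)\equiv 0$ on $\GS_{1}\cup\GS_{2}$. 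Pair the weighted Bochner identity
\[
\tfrac{1}{2}\Delta_{f}|\nabla u|^{2} = |\Hess u|^{2} + \ric_{f}(\nabla u,\nabla u) \geq K^{2}|\nabla u|^{2}
\]
with a radial cutoff $\psi^{2}$ ($\psi\equiv 1$ on $B_{R}(p_{0})$, $\supp\psi\subset B_{2R}(p_{0})$, $|\nabla\psi|\leq C/R$), and integrate over $\GO$. The weighted Green identity absorbs the boundary contribution on $\GS_{1}\cup\GS_{2}$ via the previous step, and a standard Cauchy-Schwarz / AM-GM absorption on the remaining interior term yields
\[
K^{2}\int_{\GO\cap B_{R}}|\nabla u|^{2}\,dv_{f}\;\leq\;\frac{C^{2}}{R^{2}}\int_{\GO}|\nabla u|^{2}\,dv_{f}.
\]
Sending $R\to\infty$ forces $\nabla u\equiv 0$ on $\GO$, which contradicts $u|_{\GS_{1}}\neq u|_{\GS_{2}}$.

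The main obstacle I foresee is the construction of $u$: one must guarantee that the exhaustion limit $u_n\to u$ does not collapse to a constant and that the prescribed boundary values are actually attained along the non-compact boundary $\GS_1\cup\GS_2$. Barrier arguments at regular points of $\GS_i$, combined with $f$-Schauder theory and the positivity of the gap $d$, should supply both features. The hypothesis $\ric_f\geq K^2>0$ enters twice—finite weighted volume (making the cutoff collapse cleanly) and the coercive lower bound in Bochner—which is precisely what allows us to dispense with any asymptotic hypothesis on the hypersurfaces, in contrast with Theorem~\ref{theorem-intersection}.
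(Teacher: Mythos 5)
Your boundary computation ($\Hess u(\nu,\nu)=0$ on each $\Sigma_i$ from $\Delta_f u=0$, the constancy of $u$ on $\Sigma_i$ and $H_f=0$, hence $\partial_\nu|\nabla u|^2=0$ along $\partial\Omega$) followed by the cutoff Bochner argument is exactly the content of the paper's localized Reilly formula and of Theorem \ref{th-abstract}, so that half of your plan is sound. The genuine gap is the step where you claim $|\nabla u|\in L^2(\Omega,dv_f)$: you assert this follows from interior Cheng--Yau type gradient estimates together with $\vol_f(\Omega)<+\infty$. It does not. The Cheng--Yau estimate is an interior estimate and degenerates like $\dist(x,\partial\Omega)^{-1}$; near the (noncompact) boundary the only available control is a barrier estimate, and such an estimate (cf.\ Lemma \ref{lemma-boundarygradient}) requires quantitative control of the extrinsic geometry of $\Sigma_2$ (an exterior sphere radius) as well as of $\dist(z,\Sigma_1)$, and even then yields bounds that blow up at infinity; Theorem \ref{th-positivedistance} makes no extrinsic assumption, so no gradient bound up to the boundary, uniform at infinity, is available. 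Note also that finiteness of the energy cannot come for free from $0\le u\le 1$ and finite weighted volume: by Theorem \ref{th-abstract}, the solution in the contradiction scenario has \emph{infinite} $f$-energy, and your argument never uses the hypothesis $d=\dist_M(\Sigma_1,\Sigma_2)>0$ in any quantitative way --- if your finiteness step were correct, you would have proved the full Frankel property with no separation hypothesis at all, which is precisely what the paper cannot do.

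The paper closes this gap variationally, and this is where $d>0$ enters: the solutions $u_k$ of the mixed problems \eqref{MixedBndPb} are minimizers of the weighted Dirichlet energy among $W^{1,2}$ functions with the prescribed Dirichlet data, and the globally Lipschitz competitor $\Psi(z)=\psi\bigl((\dist_M(z,\Sigma_1)-\dist_M(z,\Sigma_2)+d)/(2d)\bigr)$ satisfies $\Psi=0$ on $\Sigma_1$, $\Psi=1$ on $\Sigma_2$ and $|\nabla\Psi|\le C/d$, hence has finite $f$-energy since $\vol_f(\Omega)<+\infty$. Comparing energies along the exhaustion and passing to the limit by Fatou gives $|\nabla u|\in L^2(\Omega,dv_f)$, after which Theorem \ref{th-abstract} produces the contradiction. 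If you replace your Cheng--Yau step by this comparison argument (or by some other genuine quantitative use of the positive distance), the rest of your proof is essentially the paper's.
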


We note that this result, in the setting of embedded hypersurfaces, extends \cite[Theorem 7.4]{WeWy} since it covers also the case where the two hypersurfaces realize their positive distance at infinity.
It remains to rule out the situation where $\GS_{1}$ and $\GS_{2}$ intersect at infinity. We achieve the goal in the  following assumptions:
\begin{itemize}
 \item {\it Asymptotic Distance Condition}: $\GS_{1}$ and $\GS_{2}$ are separated ``enough''  at infinity.\medskip
 \item {\it Tubular Neighborhood Condition}: One of the hypersurfaces, say $\GS_{2}$, is separated ``enough'' from itself at infinity.
\end{itemize}

In both cases, ``enough'' is quantified in exponential terms. More precisely:

\begin{theoremA}\label{theorem-intersection}
Let $\GS^{m}_{1}$ and $\GS^{m}_{2}$ be properly embedded connected self-shrinkers in the Euclidean space $\rr^{m+1}$. Assume that
\begin{equation}\label{AsymptHp}
\liminf_{\begin{array}{cc}|z|\to+\infty \\ z\in\GS_{2}\end{array}} \,\frac{\dist_{\mathbb{R}^{m+1}}(z, \Sigma_{1})}{e^{-b|z|^2} \P(|z|)^{-1}} >0,
\end{equation}
and that the ray $R(z)>0$ of a regular normal tubular neighborhood $\CT(\GS_{2})$ at a point $z \in \GS_{2}$ decays in the following controlled way
\begin{equation}\label{AsymptHp2}
 \liminf_{
\begin{array}{cc}
|z| \to +\infty \\ z \in \GS_{2} 
\end{array}
}
\frac{R(z)}{e^{-c|z|^{2}}\P(|z|)^{-1}} >0,
\end{equation}
for some polynomial $\P \in \rr[t]$ and some constants $b,c>0$ satisfying  $0<mc + b < \tfrac{1}{2}$. Then
\[
\Sigma_{1} \cap \Sigma_{2} \not=\emptyset.
\]
\end{theoremA}

 \begin{example}[$2$-dimensional]
Let us briefly illustrate a concrete situation where our results could be applied. Let $\GS_{1},\GS_{2} \hookrightarrow \rr^{3}$ be  properly embedded self-shrinkers with finite topology.\smallskip

By \cite{Wa-preprint}, the ends of each surface $\GS_{j}$ are smoothly asymptotic either to a cylinder or to a cone over a link in $\ss^{2}$. Examples with exactly one conical end are constructed by N. Kapouleas, S. Kleene and M. M\o ller, \cite{KKM}.\smallskip

Assume that each surface $\GS_{j}$ has only one end $\CE_{j}$, asymptotic to the cone $\CC_{j}$, $j=1,2$. Then, by \cite{Wa-JAMS, ChSc}, we know that $\CE_{j}$  has a uniform normal tubular neighborhood. Indeed:
 	\begin{itemize}
 	\item  [i)] $\CE_{j} = \operatorname{Graph}_{\CC_{j}}(w_{j})$ is a normal graph over $\CC_{j}$ with $|\nabla ^{k} w_{j}| \to 0$ at $\infty$, $\forall k\geq 0$. \smallskip
	\item [ii)] $\CE_{j}$ has unit normal $\nu_{j}$ asymptotic to the unit normal $\nu_{\CC_{j}}$ of $\CC_{j}$ at $\infty$. \smallskip
	\end{itemize}

Now, the following possibilities can occur:\medskip

\noindent (a) If $\CC_{1} = \CC_{2}$, then, by \cite{Wa-JAMS}, $\CE_{1}= \CE_{2}$ and in particular $\GS_{1} \cap \GS_{2} \not=\emptyset$.\smallskip

\noindent (b) If  $\CC_{1} \cap \CC_{2} = \{ 0\}$, i.e., the links of $\CC_{1}$ and $\CC_{2}$ are disjoint,  then $\CE_{1}$ and $\CE_{2}$ are very much separated and, hence, $\GS_{1}\cap \GS_{2} \not= \emptyset$.\smallskip

\noindent (c) If $\CC_{1}$ intersects transversally $\CC_{2}$ outside $\{0\}$, i.e. the links of $\CC_{1}$ and $\CC_{2}$ intersects transversally, then $\CE_{1} \cap \CE_{2} \not= \emptyset$ and in particular $\GS_{1} \cap \GS_{2} \not= \emptyset$.\smallskip

\noindent (d) If $\CC_{1}$ intersects $ \CC_{2}$ tangentially, i.e. the links of $\CC_{1}$ and $\CC_{2}$ are tangential, then we can still conclude that $\GS_{1}\cap\GS_{2} \not= \emptyset$ provided that  $\CE_{1},\CE_{2}$ are $e^{-b|z|^{2}}$-separated at $\infty$.
\end{example}

Both Theorem \ref{theorem-intersection} and Theorem \ref{th-positivedistance}, can be considered as concrete realizations of the following abstract result.

\begin{theoremA}\label{th-abstract}
 Let $\GS^{m}_{1},\GS^{m}_{2}$ be properly embedded $f$-minimal hypersurfaces inside the complete weighted manifold $M^{m+1}_{f}$ with $\ric_{f} \geq K^2>0$. Assume that $\GS_{1}\cap \GS_{2} =\emptyset$ and that each of $\GS_{1}$ and $\GS_{2}$ separates $M_{f}$. The domain enclosed by the two hypersurfaces is denoted by $\GO$. If $u \in \C^{\infty}(\bar \GO)$ is a solution of the following problem
\begin{equation}\label{absth-dirprob}
\begin{cases}
 \Delta_{f} u =0 & \mathrm{in}\,\,\GO\\
 u = 0 & \mathrm{on}\,\,\GS_{1}\\
 u = 1 & \mathrm{on}\,\,\GS_{2}
\end{cases}
\end{equation}
then
\begin{equation*}
\lim_{R \to +\infty} \frac{\int_{B^{M}_{R}\cap \GO} |\nabla u|^{2} dv_{f}}{R^{2}} = +\infty.
\end{equation*}
\end{theoremA}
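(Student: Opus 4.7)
The plan is to combine the weighted Bochner identity with a compactly supported cut-off $\chi=\chi_{R}$ to produce a reverse-Poincar\'e type estimate
\begin{equation*}
K^{2}\int_{B^{M}_{R}\cap\GO}|\nabla u|^{2}\,dv_{f}\leq \frac{C}{R^{2}}\int_{B^{M}_{2R}\cap\GO}|\nabla u|^{2}\,dv_{f},
\end{equation*}
and then to conclude by contradiction. Since $\Delta_{f}u\equiv 0$ in $\GO$, the weighted Bochner formula collapses to
\begin{equation*}
\tfrac{1}{2}\Delta_{f}|\nabla u|^{2}=|\Hess u|^{2}+\ric_{f}(\nabla u,\nabla u)\geq K^{2}|\nabla u|^{2}.
\end{equation*}

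Multiplying this identity by $\chi^{2}$ and integrating over $\GO$ against $dv_{f}$, the weighted Stokes formula gives
\begin{equation*}
\int_{\GO}\chi^{2}\,\Delta_{f}|\nabla u|^{2}\,dv_{f}=\int_{\partial\GO}\chi^{2}\,\partial_{\nu}|\nabla u|^{2}\,dA_{f}-4\int_{\GO}\chi\,\Hess u(\nabla u,\nabla\chi)\,dv_{f}.
\end{equation*}
The pointwise bound $2|\chi\,\Hess u(\nabla u,\nabla\chi)|\leq\chi^{2}|\Hess u|^{2}+|\nabla u|^{2}|\nabla\chi|^{2}$ allows the $|\Hess u|^{2}$ terms on the two sides to cancel, leaving the localized inequality
\begin{equation*}
\int_{\GO}\chi^{2}\,\ric_{f}(\nabla u,\nabla u)\,dv_{f}\leq\tfrac{1}{2}\int_{\partial\GO}\chi^{2}\,\partial_{\nu}|\nabla u|^{2}\,dA_{f}+\int_{\GO}|\nabla u|^{2}|\nabla\chi|^{2}\,dv_{f}.
\end{equation*}

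Next I would verify that the boundary integral vanishes. On $\partial\GO=\GS_{1}\cup\GS_{2}$ the function $u$ is locally constant, so $\nabla u=u_{\nu}\,\nu$ and $\partial_{\nu}|\nabla u|^{2}=2u_{\nu}\,\Hess u(\nu,\nu)$. The weighted trace decomposition $\Delta_{f}u|_{\partial\GO}=\Delta_{f}^{\partial\GO}u+\Hess u(\nu,\nu)+H_{f}\,u_{\nu}$, combined with $\Delta_{f}u\equiv 0$, with the vanishing of $\Delta_{f}^{\partial\GO}u$ (since $u|_{\partial\GO}$ is locally constant) and with $H_{f}\equiv 0$ ($f$-minimality of $\GS_{1}$ and $\GS_{2}$), forces $\Hess u(\nu,\nu)\equiv 0$ on $\partial\GO$. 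Handling this cancellation with the right sign conventions is the main technical point of the argument; once it is in place, the remaining steps are essentially routine.

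The endgame is to choose $\chi=\chi_{R}$ with $\chi_{R}\equiv 1$ on $B_{R}^{M}$, $\supp\chi_{R}\subset B_{2R}^{M}$ and $|\nabla\chi_{R}|\leq C/R$, and to use $\ric_{f}\geq K^{2}$; this yields the reverse-Poincar\'e inequality stated above. Assume, for contradiction, that $\limsup_{R\to +\infty}R^{-2}\int_{B_{R}^{M}\cap\GO}|\nabla u|^{2}\,dv_{f}$ is finite. Then $\int_{B_{R}^{M}\cap\GO}|\nabla u|^{2}\,dv_{f}\leq C_{0}R^{2}$ for all large $R$, and plugging this into the right-hand side shows $\int_{B_{R}^{M}\cap\GO}|\nabla u|^{2}\,dv_{f}$ is uniformly bounded in $R$; hence $\int_{\GO}|\nabla u|^{2}\,dv_{f}<+\infty$. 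Letting $R\to +\infty$ in the same estimate forces $\int_{\GO}|\nabla u|^{2}\,dv_{f}=0$. Thus $u$ is constant on the connected domain $\GO$, contradicting $u|_{\GS_{1}}=0$ and $u|_{\GS_{2}}=1$.
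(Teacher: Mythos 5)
Your argument is correct and is essentially the paper's own proof: the localized Reilly formula used there is nothing but your cut-off integration by parts of the weighted Bochner identity, with the boundary term annihilated exactly as you do (local constancy of $u$ on $\partial\GO$ plus $H_{f}=0$ by $f$-minimality, forcing $\Hess u(\nu,\nu)=0$), and the same absorption of the Hessian cross term yields the reverse Poincar\'e inequality from $\ric_{f}\geq K^{2}$. The only cosmetic differences are that the paper packages the computation as a standalone Reilly-type lemma and uses Kato plus Young with $\varepsilon>1$ instead of your direct Cauchy--Schwarz, and it phrases the endgame as ``$u$ non-constant, hence $|\nabla u|\notin L^{2}(\GO,dv_{f})$, hence the $\limsup$ is infinite'' rather than your equivalent argument by contradiction.
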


The rest of the paper is organized as follows:
\begin{itemize}
 \item [-] First, in Section \ref{section-reilly}, we prove the abstract Theorem \ref{th-abstract} using a localized version of the Reilly's formula that, we feel, is of independent interest.
 \item [-] Next, in Section \ref{section-function}, we assume by contradiction that the two self-shrinkers do not intersect and we prove that a bounded solution of \eqref{absth-dirprob}, with finite Dirichlet $f$-energy, exists provided the assumptions of Theorem \ref{theorem-intersection} are met. This contradicts Theorem \ref{th-abstract}.
 \item [-] Finally, in Section \ref{section-variational}, using a variational viewpoint,  we still assume by contradiction that the two self-shrinkers do not intersect and we show that a bounded solution of \eqref{absth-dirprob}, with finite Dirichlet $f$-energy, exists without any restriction on the extrinsic geometry of $\GS_{2}$, provided the distance between the two self-shrinkers is strictly positive. This, again, contradicts Theorem \ref{th-abstract}.
\end{itemize}

\section{Localized Reilly's formula and a proof of Theorem \ref{th-abstract}}\label{section-reilly}
Reilly's formula is a celebrated integral formula first introduced in \cite{Re} to study isometric immersions and geometric bounds on the first Neumann eigenvalue of a compact manifold with boundary; see also \cite{ChWa} and the survey paper \cite{PiRiSe} for more applications. A version of this formula for the weighted Laplacian is obtained in \cite{MaDu} following the arguments in \cite{ChWa}. We are going to prove a localized version of the Reilly's formula that holds in the non-compact weighted setting.

\begin{lemma} Let $\Omega$ be a domain with smooth boundary $\GS = \partial \GO$ in the (possibly incomplete) smooth metric measure space $M^{m+1}_{f}$ and let $\phi\in \C_{c}^{\infty}(M)$. Then, for any $u\in \C^{2}(\overline \Omega)$, 
\begin{align}
&\int_{\Omega}\phi^2\left(|{\mathrm{Hess}}\,u|^2-\left({\Delta}_{f}u\right)^2+\mathrm{Ric}_{f}\left({\nabla} u, {\nabla}u\right)\right)dv_{f}+\int_{\Omega}\<{\nabla}\phi^2,\frac{1}{2}\nabla|\nabla u|^2 -{\Delta}_{f}u{\nabla}u\> dv_{f}\label{ReillyLoc}\\
&= \int_{\GS} \phi^2\left(\mathbf{A}_{\GS} (\nabla^{\GS} u, \nabla^{\GS} u) + \nabla^{\GS} u\left(\frac{\partial u}{\partial \nu}\right)-\left(\Delta^{\GS}_{f}u-H_{f}\frac{\partial u}{\partial \nu}\right)\frac{\partial u}{\partial \nu}\right)dv_{f}^{\Sigma},\nonumber
\end{align}
where $\nu$ is the exterior unit normal to $\Sigma$ and $\mathbf{A}_{\GS}$ is the corresponding second fundamental form, and $dv_{f}^{\Sigma}$ is the weighted measure of the boundary $\Sigma$.
\end{lemma}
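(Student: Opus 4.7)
The plan is to integrate the weighted Bochner identity
\begin{equation*}
\tfrac{1}{2}\Delta_{f}|\nabla u|^{2} = |\Hess u|^{2} + \ric_{f}(\nabla u,\nabla u) + \<\nabla u,\nabla\Delta_{f} u\>
\end{equation*}
against the cutoff $\phi^{2}$ and to extract all the terms appearing in \eqref{ReillyLoc} via the weighted divergence theorem on $\GO$. Since $\phi$ has compact support in $M$, the non-compactness of $\GO$ plays no role: every integral is really taken over the bounded set $\GO\cap\supp\phi$, whose topological boundary splits into the portion of $\GS$ meeting $\supp\phi$ (producing the right-hand side) and a portion of $\{\phi=0\}$ (producing nothing).

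Weighted integration by parts applied to the two ``Laplacian-type'' pieces on the right-hand side of Bochner gives
\begin{equation*}
\int_{\GO}\phi^{2}\,\tfrac{1}{2}\Delta_{f}|\nabla u|^{2}\,dv_{f} = -\int_{\GO}\<\nabla\phi^{2},\tfrac{1}{2}\nabla|\nabla u|^{2}\>\,dv_{f} + \int_{\GS}\phi^{2}\,\tfrac{1}{2}\,\frac{\partial |\nabla u|^{2}}{\partial \nu}\,dv_{f}^{\GS},
\end{equation*}
while using $\div_{f}(\phi^{2}\nabla u)=\phi^{2}\Delta_{f}u+\<\nabla\phi^{2},\nabla u\>$ yields
\begin{equation*}
\int_{\GO}\phi^{2}\<\nabla u,\nabla\Delta_{f} u\>\,dv_{f} = -\int_{\GO}\phi^{2}(\Delta_{f}u)^{2}\,dv_{f} - \int_{\GO}\Delta_{f}u\<\nabla\phi^{2},\nabla u\>\,dv_{f} + \int_{\GS}\phi^{2}\Delta_{f}u\,\frac{\partial u}{\partial\nu}\,dv_{f}^{\GS}.
\end{equation*}
Subtracting the second identity from the first and rearranging already reproduces the entire bulk left-hand side of \eqref{ReillyLoc}, together with the single boundary integral
\begin{equation*}
\int_{\GS}\phi^{2}\left(\tfrac{1}{2}\,\frac{\partial |\nabla u|^{2}}{\partial \nu} - \Delta_{f}u\,\frac{\partial u}{\partial\nu}\right)dv_{f}^{\GS}.
\end{equation*}

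It then remains to rewrite this boundary integrand in the stated geometric form. Splitting $\nabla u = \nabla^{\GS}u + \frac{\partial u}{\partial\nu}\,\nu$ and applying the Gauss formula gives $\Hess u(\nu,X) = X\!\left(\frac{\partial u}{\partial\nu}\right) + \mathbf{A}_{\GS}(X,\nabla^{\GS}u)$ for $X$ tangent to $\GS$, together with the standard decomposition $\Delta_{f}u = \Delta_{f}^{\GS}u - H_{f}\frac{\partial u}{\partial\nu} + \Hess u(\nu,\nu)$. Substituting these into
\[
\tfrac{1}{2}\,\frac{\partial |\nabla u|^{2}}{\partial \nu} = \Hess u(\nu,\nabla u) = \frac{\partial u}{\partial\nu}\,\Hess u(\nu,\nu) + \Hess u(\nu,\nabla^{\GS}u)
\]
and simplifying produces exactly the integrand on the right-hand side of \eqref{ReillyLoc}. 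The whole argument is a careful reorganization of the weighted Bochner identity; the only real point to monitor — which is precisely what localization with $\phi$ is designed for — is that every integration by parts be legitimately performed on a relatively compact subset of $\GO$, so that no boundary contributions at infinity are produced. Beyond keeping track of the sign conventions for $\mathbf{A}_{\GS}$ and $H_{f}$, it is routine bookkeeping.
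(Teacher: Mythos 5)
Your argument is correct and is essentially the paper's own proof: both amount to applying the weighted Bochner formula and the $f$-divergence theorem to the compactly supported field $\phi^{2}\bigl(\tfrac{1}{2}\nabla|\nabla u|^{2}-\Delta_{f}u\,\nabla u\bigr)$ and then decomposing the boundary integrand via $\nabla u=\nabla^{\GS}u+\frac{\partial u}{\partial\nu}\nu$; your two separate integrations by parts are just a cosmetic reorganization of the paper's single computation of $\div_{f}(\phi^{2}Z)$ (and note that in your second identity the relevant vector field is $\phi^{2}\Delta_{f}u\,\nabla u$, not $\phi^{2}\nabla u$, though the resulting identity you state is the right one).
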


\begin{proof}
Let $\phi\in C_{c}^{\infty}(M_{f})$ and let $Z={\nabla}\frac{|{\nabla}u|^2}{2}-{\Delta}_{f}u\,{\nabla}u$. Then, using the $f$-Bochner formula, we get
\begin{align*}
{\mathrm{div}}_{f}\left(\phi^2 Z\right)
=& \phi^{2}\left[{\mathrm{div}}_{f}\left({\nabla}\frac{|{\nabla} u|^2}{2}\right)-{\Delta}_{f}u\, {\mathrm{div}}_{f}({\nabla}u)-\left\langle{\nabla}{\Delta}_{f}u,{\nabla}u\right\rangle\right]\cr
&+\<{\nabla}\phi^2, {\nabla}\frac{|{\nabla}u|^2}{2}-{\Delta}_{f}u{\nabla}u\>\\
=&\phi^2\left[{\Delta}_{f}\frac{|{\nabla} u|^2}{2}-({\Delta}_{f}u)^2-\left\langle{\nabla}{\Delta}_{f}u, \nabla u\right\rangle\right]+ \<{\nabla}\phi^2, {\nabla}\frac{|{\nabla}u|^2}{2}-{\Delta}_{f}u{\nabla}u\>\\
=& \phi^2\left[|{\mathrm{Hess}}\,u|^2+\left\langle{\nabla}{\Delta}_{f}u, {\nabla} u\right\rangle + {\mathrm{Ric}}_{f}({\nabla} u, {\nabla} u)-({\Delta}_{f}u)^2-\left\langle{\nabla}{\Delta}_{f}u,{\nabla} u\right\rangle\right]\\
&+ \<{\nabla} \phi^2, {\nabla} \frac{|{\nabla} u|^2}{2}-{\Delta}_{f}u{\nabla} u\>\\
=&\phi^2\left[|{\mathrm{Hess}}\,u|^2+{\mathrm{Ric}}_{f}({\nabla} u, {\nabla} u)-({\Delta}_{f}u)^2\right]+\<{\nabla} \phi^2, {\nabla} \frac{|{\nabla} u|^2}{2}-{\Delta}_{f}u{\nabla} u\>.
\end{align*}
Applying the $f$-divergence theorem we get:
\begin{align}
\int_{\GS}\phi^2\<Z, \nu\>dv_{f}^{\Sigma}=&\int_{\Omega}{\mathrm{div}}_{f}(\phi^2Z)dv_{f}\label{ReillyLoc1}\\
=& \int_{\Omega} \phi^2\left[|{\mathrm{Hess}}\,u|^2+{\mathrm{Ric}}_{f}({\nabla} u, {\nabla} u)-({\Delta}_{f}u)^2\right]dv_{f} \cr
&+\int_{\GO} \<{\nabla} \phi^2, {\nabla} \frac{|{\nabla} u|^2}{2}-{\Delta}_{f}u{\nabla} u\>dv_{f}.\nonumber
\end{align}
Let $\left\{E_{i}\right\}_{i=1}^{m}$ be a local orthonormal frame on $\GS$. Letting $H_f:= H+\langle \nabla f,\nu\rangle$ and using the identity
\[
{\nabla}u=\nabla^{\GS} u+ \frac{\partial u}{\partial \nu}\nu,
\]
one gets that
\begin{align*}
{\Delta}_{f}u=&\sum_{i=1}^{m}{\mathrm{Hess}}\,u(E_{i}, E_{i})+{\mathrm{Hess}}\,u(\nu, \nu)-\<{\nabla} f, {\nabla} u\>\\
=&\Delta_{f}^{\GS}u+{\mathrm{Hess}}\,u(\nu, \nu)-H_{f}\frac{\partial u}{\partial \nu}
\end{align*}
and 
\begin{align*}
\<{\nabla}\frac{|{\nabla}u|^2}{2}, \nu \>=&{\mathrm{Hess}}\,u\left(\nabla^{\GS} u, \nu\right)+ \frac{\partial u}{\partial \nu}{\mathrm{Hess}}\,u(\nu, \nu)\\
=& - \<{\nabla}_{\nabla^{\GS} u}\nu, \nabla^{\GS} u \>+ \<{\nabla}_{\nabla^{\GS} u}\left(\frac{\partial u}{\partial \nu}\nu\right), \nu \> + \frac{\partial u}{\partial \nu}{\mathrm{Hess}}\,u(\nu, \nu)\\
=& \mathbf{A}_{\GS}(\nabla^{\GS} u, \nabla^{\GS} u) + {\nabla}^{\GS} u\left(\frac{\partial u}{\partial \nu}\right) + \frac{\partial u}{\partial \nu}\,{\mathrm{Hess}}\,u(\nu, \nu).
\end{align*}
Inserting these two relations in \eqref{ReillyLoc1} we obtain the desired conclusion.
\end{proof}

Theorem \ref{th-abstract} is a direct consequence of the localized Reilly's formula.
\begin{proof}[Proof of Theorem \ref{th-abstract}]
Let $u\in \C^{\infty}(\bar{\Omega})$ be a solution of 
\begin{equation*} 
\begin{cases}
 \Delta_{f} u= 0 & \text{in } \Omega\\
 u = 0 & \text{on } \GS_{1}\\
 u = 1 & \text{on } \GS_{2}
\end{cases}
\end{equation*}
Applying \eqref{ReillyLoc} to $u$ and using Young's  and Kato's inequalities we get that for any $\varepsilon>0$
\begin{align*}
0&\geq\int_{\Omega}\phi^2\left(|{\mathrm{Hess}}\,u|^2+K^2|{\nabla}u|^2\right)+ \<{\nabla}\phi^2,\frac{{\nabla}|{\nabla}u|^2}{2} \> dv_{f}\label{Ineq1}\\
&= \int_{\Omega}\phi^2\left(|{\mathrm{Hess}}\,u|^2+K^2|{\nabla}u|^2\right)+2\phi|{\nabla}u|\left\langle{\nabla}\phi,{\nabla}|{\nabla}u|\right\rangle dv_{f}\nonumber\\
&\geq \int_{\Omega}\phi^2\left(|{\mathrm{Hess}}\,u|^2+K^2|{\nabla}u|^2\right)- \frac{\phi^2}{\varepsilon}\left|{\nabla}|{\nabla}u|\right|^2-\varepsilon|{\nabla}\phi|^2|{\nabla}u|^2dv_{f}\nonumber\\
&\geq\int_{\Omega}\phi
^2\left(1-\frac{1}{\varepsilon}\right)\left|{\nabla}|{\nabla}u|\right|^2 dv_{f}+\int_{\Omega}\left(K^2\phi^2-\varepsilon|{\nabla}\phi|^2\right)|{\nabla}u|^2 dv_{f}.\nonumber
\end{align*}
Choose now $\varepsilon>1$ and let $\phi_{R}$ be smooth cut-offs such that $\phi_{R}=1$ on $B^{M}_{R}$, $\supp(\phi_{R})\subset B^{M}_{2R}$ and $|{\nabla} \phi|^2\leq \frac{4}{R^2}$. Then, we get
\begin{align*}
K^2\int_{\Omega\cap B^{M}_{R}}|{\nabla}u|^2dv_{f}&\leq K^2\int_{\Omega\cap B^{M}_{2R}}\phi^2|{\nabla} u|^2 dv_{f}\leq \varepsilon \int_{\Omega\cap B^{M}_{2R}}|{\nabla}\phi|^2|{\nabla} u|^2d v_{f}\\
&\leq \frac{4 \varepsilon}{R^2}\int_{\Omega\cap B^{M}_{2R}}|{\nabla}u|^2dv_{f}\\
\end{align*}
Taking the limit as $R\to+\infty$ this yields that 
\begin{equation}\label{limsup}
\int_{\GO} |\nabla u|^{2} dv_{f} \leq \frac{4\varepsilon}{K^2} \liminf_{R\to+\infty}\frac{1}{R^2} \int_{\Omega\cap B^{M}_{2R}}|{\nabla}u|^2dv_{f}.
\end{equation}
Since, obviously, $u$ is non-constant, we must have
\[
\liminf_{R\to+\infty}\frac{1}{R^2} \int_{\Omega\cap B^{M}_{2R}}|{\nabla}u|^2dv_{f} >0.
\]
In particular, $|\nabla  u | \not \in \L^{2}(\GO , dv_{f})$ and from inequality \eqref{limsup} we conclude
\[
\lim_{R\to+\infty}\frac{1}{R^2} \int_{\Omega\cap B^{M}_{2R}}|{\nabla}u|^2dv_{f}  = +\infty,
\]
thus completing the proof.
\end{proof}

\section{Construction of special \texorpdfstring{$f$}{f}-harmonic functions and proof of Theorem \ref{theorem-intersection}}\label{section-function}

The strategy of the proof of Theorem \ref{theorem-intersection} goes as follows:\smallskip

\begin{itemize}
 \item [-] By contradiction we assume that $\GS_{1}$ and $\GS_{2}$ are disjoint. Recall that, by the Jordan-Brouwer separation Theorem, each hypersurface $\GS_{j}$ separates $\rr^{m+1}$; \cite{GP}. Therefore it is well defined the region $\Omega$ of $\rr^{m+1}$ in between $\GS_{1}$ and $\GS_{2}$.\smallskip
 \item [-] We construct on $\Omega$ a (unique) bounded positive $f$-harmonic function $u$ with Dirichlet boundary conditions $0$ and $1$, respectively, on $\Sigma_{1}$ and $\Sigma_{2}$. To construct $u$, we solve a mixed boundary value problem along an exhaustion of $\Omega$ where a Neumann condition is imposed on the ``free'' part of the boundary. An application of interior and boundary Schauder estimates enables us to extract a subsequence converging  in $\C^{2}$ on every compact set  to the desired global solution.\smallskip
 \item [-] We use the asymptotic distance assumption \eqref{AsymptHp} and the condition on the extrinsic geometry of $\GS_{2}$ to obtain that, in fact, $|\nabla u| \in \L^{2}(\Omega,dv_{f})$. 
To this end, a Caccioppoli inequality up to the boundary reduces the problem to an estimate of $|\nabla u|$ on the hypersurface $\GS_{2}$. Thanks to the control on the extrinsic geometry of $\GS_{2}$, a pointwise estimate of $|\nabla u|$ on $\GS_{2}$ is obtained using maximum principle arguments.\smallskip
 \item [-] We use Theorem \ref{th-abstract} to get a contradiction.\smallskip
\end{itemize}

Accordingly, the main purpose of the present section is to prove the following

\begin{lemmaA}\label{lemma-fharmfunction}
Let $\GS^{m}_{1}$ and $\GS^{m}_{2}$ be disjoint, properly embedded hypersurfaces in the complete weighted manifold  $M^{m+1}_{f}$ satisfying $\ric_{f} \geq K^{2}>0$.  Assume that $\GS_{1},\GS_{2}$ separate $M$ and let $\Omega \subset M$ be the domain enclosed between these hypersurfaces so that $\partial \Omega = \GS_{1} \cup \GS_{2}$.  Then, there exists a unique bounded solution $u \in \C^{\infty}(\bar\GO)$ of the problem
\begin{equation} \label{DirichletProblem}
\begin{cases}
 \Delta_{f} u= 0 & \mathrm{in}\,\, \Omega\\
 u = 0 & \mathrm{on}\,\, \GS_{1}\\
 u = 1 & \mathrm{on}\,\, \GS_{2}
\end{cases}
\end{equation}
satisfying
\begin{equation*}
0 < u <1,\quad \mathrm{on}\,\, \GO.
\end{equation*}
Moreover, assume that $M^{m+1}_{f} = \rr^{m+1}_{f}$ is the Gaussian space and that $\Sigma_{2}$ satisfies the asymptotic distance condition  \eqref{AsymptHp} and the normal neighborhood condition \eqref{AsymptHp2}. Then $u$ has finite $f$-energy:
\begin{equation}\label{L1-integrability}
|\nabla u| \in \L^{2}(\Omega,dv_{f}).
\end{equation}
\end{lemmaA}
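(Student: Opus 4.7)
The lemma splits into two largely independent tasks: producing the bounded $f$-harmonic $u$ solving \eqref{DirichletProblem}, and establishing the finite $f$-energy assertion \eqref{L1-integrability} in the Gaussian setting.

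For the existence and uniqueness, I would build $u$ by a monotone exhaustion. Pick a smooth exhaustion $\{D_{n}\}$ of $M$ by precompact open sets whose boundaries meet $\GS_{1}$ and $\GS_{2}$ transversally, set $\GO_{n} = \GO \cap D_{n}$, and, for $n$ large, solve the mixed boundary value problem
\begin{equation*}
\begin{cases}
\Delta_{f} u_{n} = 0 & \text{in } \GO_{n},\\
u_{n} = 0 & \text{on } \GS_{1} \cap \bar{D}_{n},\\
u_{n} = 1 & \text{on } \GS_{2} \cap \bar{D}_{n},\\
\partial_{\nu} u_{n} = 0 & \text{on } \partial D_{n} \cap \GO.
\end{cases}
\end{equation*}
Weak solvability follows from the direct method applied to the weighted Dirichlet integral $v \mapsto \int |\nabla v|^{2}\, dv_{f}$; the weak and strong maximum principles force $0 < u_{n} < 1$ in the interior; and interior and boundary Schauder theory along the Dirichlet portions $\GS_{1} \cup \GS_{2}$ gives $C^{2,\alpha}_{\mathrm{loc}}(\bar\GO)$ bounds uniform in $n$, since the free Neumann part escapes to infinity along the exhaustion. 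A diagonal Arzel\`a--Ascoli extraction yields a smooth solution $u$ of \eqref{DirichletProblem} with $0 < u < 1$. For uniqueness, the hypothesis $\ric_{f} \geq K^{2} > 0$ gives $\vol_{f}(M) < \infty$ via Morgan's weighted Bonnet--Myers theorem, so $\GO_{f}$ has finite weighted volume and is $f$-parabolic; the difference of two bounded solutions of \eqref{DirichletProblem} is then a bounded $f$-harmonic function vanishing on $\partial\GO$, which must vanish identically.

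For the finite energy claim, I would test $\Delta_{f} u = 0$ against $u\varphi^{2}_{R}$, with $\varphi_{R}$ a radial cutoff equal to $1$ on $\bb_{R}$ and supported in $\bb_{2R}$. Integration by parts, together with the boundary values $u|_{\GS_{1}} = 0$ and $u|_{\GS_{2}} = 1$, yields
\begin{equation*}
\int_{\GO} \varphi_{R}^{2} |\nabla u|^{2}\, dv_{f} \;=\; \int_{\GS_{2}} \varphi_{R}^{2}\, \frac{\partial u}{\partial \nu}\, dv_{f}^{\GS} \;-\; 2\int_{\GO} u\varphi_{R} \<\nabla\varphi_{R}, \nabla u\>\, dv_{f}.
\end{equation*}
The gradient cross term is absorbed via Young's inequality at a cost of $\lesssim R^{-2}\vol_{f}(\GO \cap \bb_{2R})$, which vanishes as $R \to \infty$ because $\vol_{f}(\GO) < \infty$ in Gaussian space. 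Thus \eqref{L1-integrability} reduces to proving $\partial_{\nu} u \in \L^{1}(\GS_{2}, dv_{f}^{\GS})$. To bound $\partial_{\nu} u$ pointwise at $z \in \GS_{2}$, I would compare $u$ with a barrier of the form $w(p) = 1 - c(z)\tau(p) + C(z)\tau(p)^{2}$ in the one-sided Fermi tube $\{0 < \tau < r(z)\} \subset \GO$, where $\tau$ is the signed distance from $\GS_{2}$ into $\GO$ and $r(z) = \tfrac{1}{2}\min\{R(z), \dist(z, \GS_{1})\}$. The coefficients are tuned so that $w$ is an $f$-subsolution despite the drift $-\<x, \nabla\tau\>$ appearing in $\Delta_{f}\tau$, while $w(z) = 1 = u(z)$ and $w \leq 0 \leq u$ on the inner wall $\{\tau = r(z)\}$. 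Comparison then yields $|\nabla u|(z) \leq C(|z|)/r(z)$ up to a polynomial correction, and substituting the lower bounds \eqref{AsymptHp} and \eqref{AsymptHp2}, the integrand $\partial_{\nu} u \cdot e^{-|z|^{2}/2}$ is dominated on $\GS_{2}$ by $e^{(b+mc-1/2)|z|^{2}}\cdot\mathrm{poly}(|z|)$. Integration over dyadic annuli, using the $\CO(R^{m})$ Euclidean volume growth of $\GS_{2}$ from \cite{DiXi}, yields a finite integral precisely when $b + mc < 1/2$.

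The main obstacle is the barrier construction in Gaussian space: the unbounded drift in $\Delta_{f}\tau = \Delta\tau - \<x,\nabla\tau\>$ destroys any plain linear ansatz at infinity, so the quadratic correction must be tuned sharply enough that the gradient bound $C(|z|)/r(z)$ does not wipe out the Gaussian decay. The factor $m$ in the condition $b + mc < 1/2$ originates jointly from the Fermi-coordinate Jacobian across the $m$ tangential directions on $\GS_{2}$ and from integrating the exponential barrier bound against the polynomial Euclidean volume growth; maintaining this factor sharply enough that hypothesis \eqref{AsymptHp2} remains non-trivial is the quantitative heart of the proof. By contrast, the $f$-parabolicity and Schauder portions are routine adaptations of standard tools.
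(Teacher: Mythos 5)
Your overall architecture coincides with the paper's: existence by exhausting $\GO$ with mixed Dirichlet/Neumann problems, uniform interior and boundary Schauder estimates away from the receding Neumann portion, and a diagonal extraction; uniqueness from Dirichlet $f$-parabolicity, which follows from the finiteness of the weighted volume; and finite energy by a Caccioppoli inequality up to the boundary that reduces everything to $|\nabla u|\in \L^{1}(\GS_{2},dv^{\GS}_{f})$, which is then obtained from a pointwise barrier estimate at points of $\GS_{2}$ combined with the Euclidean volume growth of Ding--Xin. The one place where you genuinely deviate is the barrier, and that is where there is a gap. Your comparison function $w=1-c(z)\tau+C(z)\tau^{2}$ depends on the normal distance $\tau$ to $\GS_{2}$ alone, and the comparison region is declared to be the one-sided collar $\{0<\tau<r(z)\}$. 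Taken globally, this collar is unusable: $\tau$ is smooth only inside the regular normal neighborhood, whose radius $R(\cdot)$ decays along $\GS_{2}$ and far from $z$ may be smaller than $r(z)$; the collar may contain pieces of $\GS_{1}$ far from $z$ (the mutual distance is allowed to decay to zero at infinity), and there $u=0<w$; and the drift $\<x,\nabla\tau\>$ in $\Delta_{f}\tau$ is unbounded over the collar, so no choice of $c(z),C(z)$ makes $w$ a subsolution on all of it. If instead you localize the tube near $z$ (which the choice $r(z)=\tfrac12\min\{R(z),\dist(z,\GS_{1})\}$ suggests), then a lateral boundary appears, on which $w$ is close to $1$ for small $\tau$ while $u$ need not be; a barrier depending on $\tau$ only cannot lie below $u$ there, so the comparison principle does not apply as stated. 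You would need either an extra tangential term in the barrier or a different comparison domain, and this is precisely the quantitative heart of the step, not a routine repair.

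For comparison, the paper resolves exactly this difficulty with the Trudinger-type exterior-sphere barrier (\cite[Chapter 14]{GT}): using the exterior tangent ball $\bb^{m+1}_{R}(y)$ granted by the normal neighborhood, one works on the shell $W_{R,a}=\left(\bb^{m+1}_{R+a}(y)\setminus\bar\bb^{m+1}_{R}(y)\right)\cap\GO$ with a radial function $\psi(d)$, $d(x)=|x-y|-R$, solving $\psi''+\psi'\{\tfrac{m}{d+R}-d+|z|\}=0$. The point is that the boundary of $W_{R,a}$ inside $\GO$ lies on $\{d=a\}$, where $\psi=1$ and the lower-barrier inequality $-\psi=-1\leq u-1$ holds trivially, while the remaining boundary lies on $\GS_{2}$ (and $\GS_{1}$ is kept out of the small shell by choosing $a$ against $\dist(z,\GS_{1})$); there is no lateral wall to control. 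This yields $|\nabla u|(z)\leq \frac{(R+1)^{m}}{R^{m}}\frac{e^{|z|}}{\dist_{\rr^{m+1}}(z,\GS_{1})}$, and the factor $m$ in the admissibility condition $mc+b<\tfrac12$ enters through the mean-curvature term $\tfrac{m}{d+R}$ of the exterior spheres (equivalently the factor $(R+a)^{m}/R^{m}$), not through a Fermi Jacobian or the volume growth as you suggest. So either adopt the exterior-sphere barrier, or substantially strengthen your Fermi-collar construction to handle the lateral boundary and the varying normal radius; as written, the pointwise gradient estimate --- and with it the finite-energy conclusion --- is not established.
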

\smallskip

As we have outlined above, we split the proof in several steps.

\subsection{Uniqueness of the solution}\label{section-uniqueness}
Following \cite{IPS}, it is convenient to set the following
\begin{definition}
 Let $M_{f}$ be a smooth metric measure space with boundary $\partial M \not= \emptyset$. Say that $M_{f}$ is $f$-parabolic in the sense of Dirichlet if every bounded $f$-harmonic function $u \in \C^{\infty}(\inte M)\cap \C^{0}(M)$ is uniquely determined by its boundary values.
\end{definition}
 Adapting to the framework of manifolds with density what is known from \cite{IPS} (see also \cite{ILPS}), we have the following
\begin{lemma}
 Let $M_{f}$ be a complete weighted manifold with boundary such that, for some origin $o\in \inte M$, $
 \vol_{f}(B_{R}(o)) = \CO(R^{2})$. Then $M_{f}$ is $f$-parabolic in the sense of Dirichlet.
\end{lemma}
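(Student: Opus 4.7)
The plan is to reduce Dirichlet uniqueness to the following boundary-value Liouville statement: any bounded $w\in\C^{\infty}(\inte M)\cap\C^{0}(M)$ that is $f$-harmonic in $\inte M$ and satisfies $w|_{\partial M}=0$ must vanish identically. Applying this to the difference $w=u_{1}-u_{2}$ of two bounded $f$-harmonic functions with the same boundary data delivers exactly the claim.

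The first step is a Karp-type test. I would multiply $\Delta_{f}w=0$ by $\phi^{2}w$ for an arbitrary cutoff $\phi\in\C^{\infty}_{c}(M)$ and apply the $f$-divergence theorem. The boundary integral on $\partial M$ is $\int_{\partial M}\phi^{2}w\,(\partial w/\partial\nu)\,dv_{f}^{\partial M}$, and it vanishes thanks to the Dirichlet condition $w|_{\partial M}=0$. A standard Young inequality then produces the Caccioppoli-type estimate
\[
\int_{M}\phi^{2}\,|\nabla w|^{2}\,dv_{f}\;\leq\;4\,\|w\|_{\infty}^{2}\int_{M}|\nabla\phi|^{2}\,dv_{f}.
\]

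The crucial step, which I expect to be the main obstacle, is to construct a family of cutoffs $\phi_{R}$ along which $\int_{M}|\nabla\phi_{R}|^{2}\,dv_{f}\to 0$; this is the sharp Karp capacity threshold and is exactly where the hypothesis $\vol_{f}(B_{R}(o))=\CO(R^{2})$ must be used in a tight way (any faster polynomial growth would leave this integral bounded away from zero). I would use the logarithmic cutoff $\phi_{R}\equiv 1$ on $B_{R}(o)\cap M$, $\phi_{R}\equiv 0$ outside $B_{R^{2}}(o)\cap M$, and $\phi_{R}(x)=(\log R^{2}-\log r(x))/\log R$ on the annulus $\{R\leq r(x)\leq R^{2}\}$, so that $|\nabla \phi_{R}|^{2}\leq r^{-2}(\log R)^{-2}$ on its support. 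Decomposing this annulus dyadically into pieces $A_{k}=(B_{2^{k+1}}\setminus B_{2^{k}})\cap M$ with $\log_{2}R\leq k\leq 2\log_{2}R$ and invoking $\vol_{f}(A_{k})\leq C\cdot 2^{2(k+1)}$, each dyadic contribution is $\CO(1/\log^{2}R)$ and the $\CO(\log R)$ terms sum to
\[
\int_{M}|\nabla\phi_{R}|^{2}\,dv_{f}=\CO\!\left(\frac{1}{\log R}\right).
\]

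Finally, Fatou's lemma gives $\int_{M}|\nabla w|^{2}\,dv_{f}\leq \liminf_{R\to+\infty}\int_{M}\phi_{R}^{2}|\nabla w|^{2}\,dv_{f}=0$, so $w$ is locally constant on $M$; combined with $w|_{\partial M}=0$ this forces $w\equiv 0$ on every connected component of $M$ that meets $\partial M$, and the same cutoff argument (with no boundary contribution, i.e. the classical $f$-parabolicity) disposes of any residual component disjoint from $\partial M$.
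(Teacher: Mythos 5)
Your overall strategy is the right one, and in fact it supplies a proof where the paper gives none: the lemma is stated as an adaptation of results from \cite{IPS} (see also \cite{ILPS}) without an argument. Reducing Dirichlet uniqueness to the vanishing of a bounded $f$-harmonic $w$ with $w|_{\partial M}=0$, proving a Caccioppoli inequality, and then killing the energy with logarithmic cutoffs is precisely the standard Karp--Grigor'yan capacity mechanism behind those references, and your cutoff computation is correct: $|\nabla\phi_{R}|^{2}\leq r^{-2}(\log R)^{-2}$ together with $\vol_{f}(B_{2^{k+1}}(o))\leq C\,2^{2(k+1)}$ gives an $\CO(1)$ contribution per dyadic annulus and $\CO(\log R)$ annuli, hence $\int_{M}|\nabla\phi_{R}|^{2}dv_{f}=\CO(1/\log R)$, which is exactly where the quadratic volume growth is used sharply.

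There is, however, one genuine gap in the step producing the Caccioppoli inequality. The definition of Dirichlet $f$-parabolicity only asks $w\in\C^{\infty}(\inte M)\cap\C^{0}(M)$: no $\C^{1}$ (or $\W^{1,2}$ up to $\partial M$) regularity at the boundary is available. Hence $\partial w/\partial\nu$ need not exist on $\partial M$, $|\nabla w|$ may a priori blow up near $\partial M$, and the $f$-divergence theorem applied to $\phi^{2}w\nabla w$ on $\supp\phi\cap M$ is not justified; declaring the boundary integral to vanish ``because $w=0$ on $\partial M$'' is only formal. The standard repair keeps your scheme intact: test instead with $\phi^{2}(w-\epsilon)_{+}$ for $\epsilon>0$. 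Since $w$ is continuous and vanishes on $\partial M$, the set $\{w\geq\epsilon\}\cap\supp\phi$ is a compact subset of $\inte M$, so this test function is Lipschitz with compact support in the interior, no boundary term ever appears, and one gets
\begin{equation*}
\int_{M}\phi^{2}\,\chi_{\{w>\epsilon\}}|\nabla w|^{2}\,dv_{f}\;\leq\;4\,\|w\|_{\infty}^{2}\int_{M}|\nabla\phi|^{2}\,dv_{f}.
\end{equation*}
Running your cutoffs and letting $\epsilon\searrow 0$ shows $w$ is locally constant on $\{w>0\}$; each component of $\{w>0\}$ is then open and closed in $M$ and disjoint from $\partial M$, impossible when $M$ is connected with $\partial M\neq\emptyset$, so $w\leq 0$, and symmetrically $w\geq 0$. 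Finally, your last sentence is slightly off: on a component of $M$ disjoint from $\partial M$, classical $f$-parabolicity only yields that $w$ is \emph{constant}, not zero, and Dirichlet uniqueness genuinely fails there; the statement implicitly assumes (as in its application to $\bar\GO$ with $\partial\GO=\GS_{1}\cup\GS_{2}$) that every component of $M$ has nonempty boundary.
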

Since, in the setting of Lemma \ref{lemma-fharmfunction}, $\vol_{f}(\GO) \leq \vol_{f}(M)<+\infty$, the complete manifold $\bar \GO$ with boundary $\partial \GO \not= \emptyset$ is $f$-parabolic in the sense of Dirichlet. Hence a bounded solution of \eqref{DirichletProblem}, if any, must be unique.

\subsection{Existence of the solution by exhaustion}\label{section-existencesolution}

 As in the statement of Lemma \ref{lemma-fharmfunction}, let $M^{m+1}_{f}$ be a complete smooth metric measure space and let $\bar \GO \subset M$  be the domain whose boundary is given by $\partial \Omega = \GS_{1} \cup \GS_{2}$. Let $D_{k} \nearrow M$ be an exhaustion of $M$ by relatively compact domains with smooth boundary $\partial D_{k}$ intersecting transversally $\GS_{1}$ and $\GS_{2}$; see e.g. \cite{PePiSe}. Let $\GO_{k} \nearrow \GO$ be the Lipschitz domains defined by $\GO_{k} = D_{k} \cap \GO$. Note that,
\begin{equation*}
\partial \Omega_{k} = \GS_{1,k} \cup \GS_{2,k} \cup \GG_{k}
\end{equation*}
where $\GS_{i,k} \subset \GS_{i}$, $i=1,2$, and $\GG_{k} \subset \partial D_{k}$. The singular set of $\GO_{k}$ is denoted by $\CS_{k}$. Then, $\{\Omega_{k}\}$ is a ``good'' exhaustion of $\Omega$ with respect to mixed boundary value problems. Consider the solution $u_{k}$ to the problem
\begin{equation}\label{MixedBndPb}
 \begin{cases}
 \Delta_{f} u_{k} = 0 & \textrm{in }\Omega_{k} \\
 u_{k} = 0 & \textrm{on } \GS_{1,k}  \\
 u_{k} = 1 & \textrm{on } \GS_{2,k}  \\
\frac{ \partial u_{k}}{\partial \nu_{k}} = 0 & \text{on } \GG_{k}, \\
\end{cases}
\end{equation}
where $\nu_{k}$ is the outward unit normal to $\GG_{k}$. It follows from the Perron construction in \cite{Li1}, and the well-known local regularity theory, that $u_{k}\in \C^{0}(\bar{\Omega}_{k})\cap \C^{\infty}\left(\bar \Omega_{k}\setminus \CS_{k}\right)$. Furthermore, by the strong maximum principle and the boundary point lemma,
\begin{equation*}
0<u_{k}<1, \quad \text{on }\Omega_{k}.
\end{equation*}
For any fixed $k_{0} \in \nn$, let us consider the sequence of solutions of \eqref{MixedBndPb}:
\[
\U_{k_{0}+2} = \{u_{k} : k \geq k_{0}+2 \} \subset \C^{\infty}(\bar \GO_{k_{0}+1}).
\]
We claim that, given $\alpha>0$, there exists a constant $C_{k_{0}}>0$ such that
\begin{equation}\label{c2alpha-estimate}
\sup_{u_{k} \in \U_{k_{0}+2}}  \| u_{k} \|_{\C^{2,\alpha}(\bar\GO_{k_{0}})} \leq C_{k_{0}}.
\end{equation}
To this end:
\begin{itemize}
 \item [-] We apply \cite[Corollary 6.7]{GT} with the choices $\GO:= \GO_{k_{0}+1}$, $T:= \GS_{1,k_{0}+1}$, $L := \Delta_{f}$, $\varphi:= 0$, $f:= 0$ and we obtain that there exists a ray $\d_{1,k_{0}}>0$ and a constant $C_{1,k_{0}}>0$ such that, having defined the $\d_{k_{0}}$-neighborhood of $\GS_{1,k_{0}}$ as
\[
\CT_{\d_{1,k_{0}}}(\GS_{1,k_{0}}) = \cup_{p \in \GS_{1,k_{0}}} B^{M}_{\d_{1,k_{0}}}(p)\cap \GO_{k_{0}+1}
\]
it holds
\begin{equation*}
 \| u_{k} \|_{\C^{2,\alpha}\left(\CT_{\d_{1,k_{0}}}(\GS_{1,k_{0}}) \right)} \leq C_{1,k_{0}},
\end{equation*}
for every $u_{k} \in \U_{k_{0}+2}$.
\item [-] Similarly, we apply \cite[Corollary 6.7]{GT} with the choices $\GO:= \GO_{k_{0}+1}$, $T := \GS_{2,k_{0}+1}$, $L := \Delta_{f}$, $\varphi:= 1$, $f:= 0$ and we obtain that there exists a ray $\d_{2,k_{0}}>0$ and a constant $C_{2,k_{0}}>0$ such that
\begin{equation*}
 \| u_{k} \|_{\C^{2,\alpha}\left(\CT_{\d_{2,k_{0}}}(\GS_{2,k_{0}}) \right)} \leq C_{2,k_{0}},
\end{equation*}
for every $u_{k} \in \U_{k_{0}+2}$.
\item [-] Set
\[
\d_{k_{0}} = \min(\d_{1,k_{0}}, \d_{2,k_{0}}) >0
\]
and define the compact set
\[
\CK_{k_{0}} = \bar \GO_{k_{0}} \setminus \left( \CT_{\frac{\d_{k_{0}}}{2}}(\GS_{1,k_{0}}) \cup \CT_{\frac{\d_{k_{0}}}{2}}(\GS_{2,k_{0}})\right) \subset \GO_{k_{0}+1}.
\]
By \cite[Theorem 6.2]{GT} with $\GO := \GO_{k_{0}+1}$ and $L := \Delta_{f}$, there exists a constant $C_{3,k_{0}}>0$ such that
\begin{equation*}
  \| u_{k} \|_{\C^{2,\alpha}(\CK_{k_{0}})} \leq C_{3,k_{0}},
\end{equation*}
for every $u_{k} \in \U_{k_{0}+2}$.
\item [-] Since
\[
\GO_{k_{0}} \subset \CT_{\d_{k_{0}}}(\GS_{1,k_{0}}) \cup \CT_{\d_{k_{0}}}(\GS_{2,k_{0}}) \cup \CK_{k_{0}},
\]
and $\U_{k_{0}+2}\subset\C^{\infty}(\bar{\Omega}_{k_{0}+1})$, the claimed estimate \eqref{c2alpha-estimate} follows by taking $C_{k_{0}} = \max(C_{1,k_{0}},C_{2,k_{0}},C_{3,k_{0}})$.
\end{itemize}
Now,  for $0 < \alpha_{1} < \alpha_{2}$, the embedding $\C^{2,\alpha_{2}}(\bar \GO_{k_{0}}) \hookrightarrow \C^{2,\alpha_{1}}(\bar \GO_{k_{0}})$ is compact. Therefore, possibly passing to a subsequence, we obtain that $\U_{k_{0}+2}$ converges in $\C^{2}(\bar \Omega_{k_{0}})$ to a solution $u_{k_{0}} \in \C^{2}(\bar \GO_{k_{0}})$ (actually $u_{k_{0}} \in \C^{\infty}(\bar \GO_{k_{0}})$ by higher elliptic regularity) of the problem
\[
\begin{cases}
 \Delta_{f} u _{k_{0}} =0, & \text{in } \GO_{k_{0}}\\
 u_{k_{0}} = 0, & \text{on }\GS_{1,k_{0}}\\
 u_{k_{0}} = 1, & \text{on }\GS_{2,k_{1}}.
\end{cases}
\]
Moreover,
\[
0 \leq u_{k_{0}} \leq 1.
\]

To conclude the construction, we let $k_{0}$ increase to $+\infty$ and we use a classical diagonal argument. This yields the desired solution $u \in \C^{\infty}(\bar \GO)$ of 
\[
\begin{cases}
 \Delta_{f} u =0, & \text{in } \GO\\
 u = 0, & \text{on }\GS_{1}\\
 u = 1, & \text{on }\GS_{2},
\end{cases}
\]
satisfying
\[
0 \leq u \leq 1.
\]

\subsection{The finite \texorpdfstring{$f$}{f}-energy condition}

From now on, unless otherwise specified, we assume that $M_{f}$ is the Gaussian space $\mathbb{R}_{f}^{m+1}$. We want to obtain the finiteness of the Dirichlet $f$-energy of the solution $u$ of \eqref {DirichletProblem}:  $\| \nabla u \|_{\L^{2}(\Omega,dv_{f})}<+\infty$. A standard Caccioppoli inequality up to the boundary reduces the problem to an estimate of  $|\nabla u|$ along $\GS_{2}$ (the boundary hypersurface where the datum $1$ is imposed). This latter, in turn, can be carried out by maximum principle considerations.
\subsubsection{A Caccioppoli inequality up to the boundary}
We prove the following
\begin{lemma}\label{lemma-caccioppoli}
Let $u: \Omega \to \rr$ be a solution of \eqref{DirichletProblem}.
Then,
\begin{equation}\label{C2}
 \int_{\GO}|\nabla u |^{2} dv_{f}   \leq 2  \int_{\GS_{2}} |\nabla u| \, dv_{2;f_{2}} 
\end{equation}
where $dv_{i;f_{i}}$ is the weighted measure of the boundary hypersurface $\GS_{i,f_{i}}$ and $f_{i} = f|_{\Sigma_{i}}$, $i=1,2$.
\end{lemma}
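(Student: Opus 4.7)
The approach is the standard Caccioppoli technique: test the equation $\Delta_{f}u=0$ against $\phi^{2}u$ for a compactly supported cutoff $\phi$, integrate by parts via the weighted divergence theorem on the Gaussian space, and exploit both the Dirichlet data and the finiteness of the total Gaussian volume to pass to the limit.

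First I would fix radial cutoffs $\phi_{R}\in\C^{\infty}_{c}(\rr^{m+1})$ with $\phi_{R}\equiv 1$ on $B_{R}$, $\supp\phi_{R}\subset B_{2R}$, and $|\nabla\phi_{R}|\leq 2/R$. Using $\Delta_{f}u=0$ one obtains the pointwise identity
\[
\mathrm{div}_{f}(\phi_{R}^{2}\,u\,\nabla u)=\phi_{R}^{2}|\nabla u|^{2}+2\phi_{R}\,u\,\langle\nabla\phi_{R},\nabla u\rangle.
\]
Since $\phi_{R}^{2}u\nabla u$ has compact support in $\bar{\Omega}$ and $u\in\C^{\infty}(\bar{\Omega})$, the weighted divergence theorem on $\Omega$ applies. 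The boundary $\partial\Omega=\Sigma_{1}\cup\Sigma_{2}$ contributes nothing on $\Sigma_{1}$ (where $u\equiv 0$); on $\Sigma_{2}$, where $u\equiv 1$ forces the tangential gradient to vanish, the contribution reduces to $\int_{\Sigma_{2}}\phi_{R}^{2}\langle\nabla u,\nu\rangle\,dv_{2;f_{2}}$, with $\nu$ the outward unit normal to $\Omega$. Since $\Sigma_{2}$ is a boundary maximum of $u$, the Hopf boundary point lemma guarantees $\langle\nabla u,\nu\rangle\geq 0$, and combined with the vanishing tangential gradient this gives the identity $\langle\nabla u,\nu\rangle=|\nabla u|$ along $\Sigma_{2}$.

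Next I would absorb the cross-term via Young's inequality together with the $L^{\infty}$ bound $0\leq u\leq 1$ from Lemma \ref{lemma-fharmfunction}:
\[
2\phi_{R}u\,|\langle\nabla\phi_{R},\nabla u\rangle|\leq\tfrac{1}{2}\phi_{R}^{2}|\nabla u|^{2}+2|\nabla\phi_{R}|^{2}.
\]
Rearranging the integrated identity produces
\[
\int_{\Omega}\phi_{R}^{2}|\nabla u|^{2}\,dv_{f}\leq 2\int_{\Sigma_{2}}\phi_{R}^{2}|\nabla u|\,dv_{2;f_{2}}+4\int_{\Omega}|\nabla\phi_{R}|^{2}\,dv_{f}.
\]

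Finally I would let $R\to+\infty$. Here the finite Gaussian volume plays the key role: since $|\nabla\phi_{R}|^{2}\leq 4/R^{2}$ and $\vol_{f}(\rr^{m+1})<+\infty$, the error term satisfies $\int_{\Omega}|\nabla\phi_{R}|^{2}\,dv_{f}\leq(4/R^{2})\vol_{f}(\rr^{m+1})\to 0$. Monotone convergence on both the bulk and the boundary integrals then yields \eqref{C2} (the inequality being trivial when the right-hand side is $+\infty$). I do not anticipate a serious obstacle: the smoothness of $u$ up to $\Sigma_{1}\cup\Sigma_{2}$ is already provided by the construction of Section \ref{section-existencesolution}, the Hopf lemma is available because $\Delta_{f}$ is uniformly elliptic near the smooth boundary, and the Gaussian weight is what makes the tail of the cutoff vanish — essentially the one point at which the ambient geometry is used.
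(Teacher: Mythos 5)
Your proof is correct and follows essentially the same route as the paper: the test field $\phi_R^{2}u\nabla u$, the weighted divergence theorem, Young's inequality to absorb the cross term, the maximum principle/Hopf lemma to identify $\partial u/\partial\nu=|\nabla u|$ on $\Sigma_{2}$, and the finiteness of the Gaussian weighted volume to make the cutoff error $O(R^{-2})$ vanish. The only cosmetic difference is that you bound $u^{2}\le 1$ directly in the error term, whereas the paper keeps $\int_{\Omega}u^{2}\,dv_{f}$ and invokes $u\in\L^{2}(\Omega,dv_{f})$ on the finite $f$-measure domain.
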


\begin{proof}
 Let $\vp = \vp_{R}\in \C^{\infty}_{c}(\rr^{m+1})$ be a family of standard cut-off functions satisfying the following conditions:
 
\begin{enumerate}
 \item [i)] $0 \leq \vp \leq 1$;
 \item [ii)] $\vp = 1$ on $\bb^{m+1}_{R/4}(\bar x)$;
 \item [iii)] $\vp = 0$ on $\rr^{m+1} \setminus \bb^{m+1}_{R/2}(\bar x)$;
 \item [iv)] $\| \nabla \vp \|_{\infty} \leq 2/R$.
\end{enumerate}
Consider the vector field
\[
X = \vp^{2} u \nabla u
\]
Observe that the $f$-divergence of $X$ is given by
\[
\div_{f}X = 2u\vp \langle \nabla u , \nabla \vp\rangle + \vp^{2} |\nabla u|^{2}.
\]
Integrating $X$ on $\GO$, using the $f$-divergence theorem, the Young inequality and recalling the (Dirichlet) boundary conditions satisfied by $u$, we get
\begin{align*}
\int_{\GO} \vp^{2}|\nabla u|^{2} dv_{f} &= -\int_{\GO} 2u\vp \langle \nabla u , \nabla \vp\rangle dv_{f} + \int_{\GS_{2}} \vp^{2} \frac{\partial u}{\partial \nu_{2}} \, dv_{2;f_{2}}\\
&\leq \varepsilon \int_{\GO} u^{2} |\nabla \vp|^{2}dv_{f} + \varepsilon^{-1} \int_{\GO} \vp^{2} |\nabla u|^{2} \, dv_{f} +  \int_{\GS_{2}} \vp^{2}  \frac{\partial u}{\partial \nu_{2}} \, dv_{2;f_{2}}
\end{align*}
where $\varepsilon>0$ is any fixed constant. Whence, if $\varepsilon =2$ we deduce
\[
 \int_{\GO} \vp^{2}|\nabla u|^{2} dv_{f} \leq  2 \left\{2 \int_{\GO} u^{2} |\nabla \vp|^{2} dv_{f} +  \int_{\GS_{2}} \vp^{2}  \frac{\partial u}{\partial \nu_{2}} \, dv_{2;f_{2}}\right\}.
\]
Now, by the boundary point lemma and the maximum principle we have
\[
\GS_{2} = \{ u = 1\},\quad \nabla u \not = 0, \text{ on }\GS_{2}
\]
and the outward unit normal to the hypersurface $\GS_{2}$ is given by
\[
\nu_{2} = \frac{\nabla u}{|\nabla u|}.
\]
Inserting this information into the above integral inequality, and recalling property iv) of $\vp$, we deduce
\[
  \int_{\GO} \vp^{2}|\nabla u|^{2} dv_{f}  \leq \frac{16}{R^{2}} \int_{\GO \cap \bb^{m+1}_{2R}} u^{2} dv_{f} +  2 \int_{\GS_{2}} \vp^{2}  |\nabla u| \, dv_{2;f_{2}}.
\]
Whence, using once again the properties of $\vp = \vp_{R}$, the fact that $u$ is bounded, hence $\L^{2}(\GO,dv_{f})$, on the finite $f$-measure domain $\GO$, and letting $R \to +\infty$ we conclude the validity of \eqref{C2}.
\end{proof}


\subsubsection{Pointwise gradient estimates at the boundary}
It remains to estimate $\int_{\GS_{2}}|\nabla u| dv_{2;f_{2}}$. Since the self-shrinker $\Sigma_{2}$ is properly embedded, it has  polynomial (actually Euclidean) extrinsic volume growth. Assuming that $\GS_{2}$ has bounded extrinsic geometry, we can estimate $|\nabla u|$ pointwise along $\GS_{2}$ by  maximum principle arguments. This is done in the next Lemma. Whence, using \eqref{AsymptHp} we shall deduce immediately the desired $\L^{1}$ integrability of $|\nabla u|$ on $\GS_{2}$, thus completing the proof of Lemma \ref{lemma-fharmfunction}.
 
\begin{lemma}\label{lemma-boundarygradient}
 Let $u \in \C^{\infty}(\bar \GO)$ be a solution of \eqref{DirichletProblem}. Assume that $\GS_{2}$ satisfies an exterior $R$-sphere condition at some point $z \in \GS_{2}$. Then
 \begin{equation}\label{uniformgradest}
 |\nabla u|(z) \leq  \frac{(R+1)^{m}}{R^{m}} \frac{e^{|z|}}{\dist_{\rr^{m+1}}(z,\GS_{1})}.
\end{equation}
\end{lemma}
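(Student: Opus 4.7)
The plan is to reduce the bound to a Hopf-type boundary gradient estimate for the complementary function $v:=1-u$. Since $u$ solves \eqref{DirichletProblem}, the function $v$ is $f$-harmonic in $\Omega$ with $v=0$ on $\Sigma_{2}$ and $v=1$ on $\Sigma_{1}$, and by the maximum principle $0\leq v\leq 1$. In particular $v$ attains its minimum at the point $z\in\Sigma_{2}$, so $|\nabla u|(z)=|\nabla v|(z)=\partial_{\tau}v(z)$ with $\tau=-\nu_{2}$ the inward unit normal to $\Omega$ at $z$. The strategy is then the classical one: build an $f$-superharmonic barrier $w$ on a suitable domain $D\subset\Omega$ containing a one-sided neighborhood of $z$, with $w(z)=v(z)=0$ and $w\geq v$ on $\partial D$, so that the comparison principle gives $w\geq v$ in $\bar D$ and consequently $|\nabla v|(z)\leq |\nabla w|(z)$.

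For the barrier I would exploit the exterior sphere at $z$: let $y=z+R\nu_{2}(z)$, so that $B_{R}(y)\cap\Omega=\emptyset$ and $|y|\leq |z|+R$. The natural choice is a radial profile centered at $y$,
\[
w(x)=\Psi(|x-y|),\qquad \Psi(R)=0,\quad \Psi'>0,
\]
defined on $D=B_{R+1}(y)\cap\Omega$. A direct computation gives
\[
\Delta_{f}w=\Psi''(r)+\Psi'(r)\Big[\frac{m-\langle y,x-y\rangle}{r}-r\Big],\qquad r=|x-y|.
\]
Using the pointwise bound $\langle y,x-y\rangle\geq -|y|\,r\geq-(|z|+R)r$, the worst-case supersolution condition $\Delta_{f}w\leq0$ becomes the linear ODE $(\log\Psi')'(r)\leq r-m/r-|y|$, whose equality-version is solved explicitly by
\[
\Psi'(r)=\Psi'(R)\Big(\frac{R}{r}\Big)^{m}\exp\!\Big(\tfrac{r^{2}-R^{2}}{2}-|y|(r-R)\Big).
\]

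The normalization of $\Psi'(R)$ is fixed by the requirement $w\geq v$ on the remaining portions of $\partial D$: on $\Sigma_{2}\cap D$ this is automatic since $w\geq0=v$, while on $\partial B_{R+1}(y)\cap\bar\Omega$ and on $\Sigma_{1}\cap D$ (if present) the relation $v\leq1$ together with the exact value $v=1$ on $\Sigma_{1}$ forces $\Psi$ to reach height $\geq1$ by a radius comparable to $\mathrm{dist}(y,\Sigma_{1})$; since $\mathrm{dist}(y,\Sigma_{1})\leq R+\mathrm{dist}(z,\Sigma_{1})$, this translates into the scaling factor $1/\mathrm{dist}(z,\Sigma_{1})$ in the final bound. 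Bounding the integral
$\int_{R}^{R+1}(R/s)^{m}\exp((s-R)[(s+R)/2-|y|])\,ds$
from below by its contribution near $s=R$ and invoking $|y|\leq|z|+R$ produces
\[
\Psi'(R)\leq \frac{(R+1)^{m}}{R^{m}}\cdot\frac{e^{|z|}}{\mathrm{dist}(z,\Sigma_{1})},
\]
which, through the comparison $|\nabla u|(z)\leq\Psi'(R)$, yields the claim.

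The hard part is the second bullet: guaranteeing that the normalized profile $\Psi$ really dominates $v=1$ on $\Sigma_{1}\cap D$, because $\Sigma_{1}$ may penetrate $B_{R+1}(y)$ at radii as small as $\mathrm{dist}(z,\Sigma_{1})-R$. Handling this forces the rescaling by $1/\mathrm{dist}(z,\Sigma_{1})$ and is precisely where the geometric factor $1/d^{*}$ in the statement originates; together with the exponent $(s-R)((s+R)/2-|y|)$, which is controlled by $-|z|$ in the worst case, it gives rise to the product $e^{|z|}/\mathrm{dist}(z,\Sigma_{1})$, while the purely geometric ratio $(R+1)^{m}/R^{m}$ comes from the factor $(R/s)^{m}$ in the profile integrated over $[R,R+1]$.
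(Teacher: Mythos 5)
Your overall strategy is the one the paper uses: pass to the complementary function, put a radial barrier $\Psi(|x-y|)$ centered at the exterior-ball center $y$, compute $\Delta_f$ of it for the Ornstein--Uhlenbeck drift, solve the worst-case ODE for the profile, and read off $|\nabla u|(z)\le \Psi'(R)$ by comparison (the paper works with $v=u-1$ and the variable $d=|x-y|-R$, but the profile and the slope estimate are the same). The genuine gap is in the normalization step, which is exactly where the factor $e^{|z|}/\dist_{\rr^{m+1}}(z,\Sigma_1)$ has to be produced. With your fixed comparison domain $D=\bb^{m+1}_{R+1}(y)\cap\GO$, the requirement $w\ge v$ on $\Sigma_1\cap D$ (where $v\equiv 1$) forces $\Psi$ to reach the value $1$ no later than the \emph{smallest} radius at which points of $\Sigma_1$ can occur, i.e.\ you need a \emph{lower} bound on $\dist(y,\Sigma_1)-R$, the length of the interval on which $\Psi$ is allowed to climb from $0$ to $1$. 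Instead you invoke the upper bound $\dist(y,\Sigma_1)\le R+\dist(z,\Sigma_1)$, which points in the wrong direction and gives nothing. The lower bounds actually available from the hypotheses are only $\dist(y,\Sigma_1)\ge\max\{R,\dist(z,\Sigma_1)-R\}$; when $\dist(z,\Sigma_1)\le 2R$ these do not prevent $\Sigma_1$ from approaching the tangent sphere $\partial\bb^{m+1}_R(y)$ at a point near, but different from, $z$, so the climb interval is not controlled by $R$ and $\dist(z,\Sigma_1)$ alone and no estimate of the stated form follows from your domain. Consistently, your own lower bound for the integral over the full interval $[R,R+1]$ yields a slope bound of size $\tfrac{(R+1)^m}{R^m}e^{|z|}$ with no $1/\dist$ factor; the missing factor cannot be restored afterwards by the heuristic ``rescaling'' you describe.

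The paper sidesteps this difficulty rather than confronting it: the comparison region is the thin annulus $W_{R,a}=\left(\bb^{m+1}_{R+a}(y)\setminus\bar\bb^{m+1}_{R}(y)\right)\cap\GO$ whose width is tied to the quantity appearing in the statement, $a=\min\left(1,\dist_{\rr^{m+1}}(z,\Sigma_1)-\epsilon\right)$; the choice $a<\dist_{\rr^{m+1}}(z,\Sigma_1)$ is made precisely so that $\Sigma_1$ is kept out of the comparison region, and then the barrier only has to attain the value $1$ on the outer sphere $\partial\bb^{m+1}_{R+a}(y)$, where trivially the solution is between $0$ and $1$, while on $\partial\GO\cap\bar W_{R,a}=\Sigma_2\cap\bar W_{R,a}$ the comparison is automatic. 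The factor $1/\dist_{\rr^{m+1}}(z,\Sigma_1)$ then arises solely from normalizing $\psi(a)=1$ over an interval of length $a\le\min(1,\dist_{\rr^{m+1}}(z,\Sigma_1))$, namely $\psi'(0)\le\tfrac{(R+a)^m}{R^m a}\,e^{a|z|}$. So the idea missing from your write-up is to let the outer radius of the comparison domain depend on $\dist_{\rr^{m+1}}(z,\Sigma_1)$ (take $R+\min(1,\dist_{\rr^{m+1}}(z,\Sigma_1))$ instead of $R+1$); with that single change your computation of the profile and of $\Psi'(R)$ goes through essentially verbatim and gives the constant in \eqref{uniformgradest}.
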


Recall that a connected component $\GS$ of the boundary $\partial \GO$ of a domain $\GO\subset \rr^{m+1}$ is said to satisfy the {\it exterior $R$-sphere condition at $z \in \GO$} if there exists a ball $\bb^{m+1}_{R}(y) \subset \rr^{m+1}\setminus \bar\GO$ such that $\bar \bb^{m+1}_{R}(y)$ is tangent to $\GS$ at $z$. Clearly the exterior $R$-sphere condition at $z$ implies that the (scalar) second fundamental form $A_{\GS} = \< \mathbf{A}_{\GS},\nu\>$ of $\GS$, with respect to the exterior unit normal $\nu$, satisfies $A_{\GS}(z) \leq 1/R$ in the sense of quadratic forms. More importantly, we point out the following simple fact that gives the link between the exterior sphere condition and the tubular neighborhood assumption in Lemma \ref{lemma-fharmfunction}.
\begin{fact}
The exterior $R$-sphere condition at $z \in \GS$ is implied by the existence of a regular normal neighborhood  of $\GS$ whose width at $z$ is at least $R$.
\end{fact}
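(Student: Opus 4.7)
The plan is to construct the exterior ball directly: let $\nu$ denote the outward unit normal to $\GS$ at $z$, set $y := z + R\nu$, and verify that $\bb^{m+1}_R(y) \subset \rr^{m+1}\setminus\bar\GO$ with $\bar\bb^{m+1}_R(y)$ tangent to $\GS$ at $z$. Tangency is automatic: $|z - y| = R$, so $z \in \partial\bb^{m+1}_R(y)$, and the outward radial direction at $z$ (which is precisely $\nu$) is simultaneously normal to $\partial\bb^{m+1}_R(y)$ and to $\GS$, so the two tangent hyperplanes agree. The entire geometric content is therefore concentrated in the inclusion.

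The crux is the equality $\dist_{\rr^{m+1}}(y,\GS) = R$, which I establish by a first-variation/injectivity argument inside the regular normal neighborhood $\CT(\GS)$, on which the normal map $F(p,t) := p + t\nu(p)$ is a diffeomorphism of the tube $\{(p,t) : |t| < R(p)\}$ with $R(z) \geq R$. For every $0 < s < R$, the point $y_s := z + s\nu$ lies in $\CT(\GS)$. If $q \in \GS$ realizes $\dist(y_s,\GS)$, then $y_s - q \perp T_q\GS$ by first-variation of the squared distance, so $y_s = F(q, \pm d)$ with $d := \dist(y_s,\GS) \leq |y_s - z| = s < R$. Comparing with $y_s = F(z,s)$ and invoking injectivity of $F$ on the width-$R$ tube forces $q = z$ and $d = s$. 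Hence $\dist(y_s,\GS) = s$ for every $0 < s < R$, and continuity of $\dist(\cdot,\GS)$ at $s = R$ yields $\dist(y,\GS) = R$.

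To conclude the inclusion, for any $w \in \bb^{m+1}_R(y)$ every point on the segment from $y$ to $w$ is at distance at least $R - |w-y| > 0$ from $\GS$ by the triangle inequality, so this segment does not meet $\GS$. The endpoint $y$ itself lies in $\rr^{m+1}\setminus\bar\GO$, since it is a limit of exterior points $y_s \in \CT(\GS)\setminus\bar\GO$ and $\dist(y,\GS) > 0$ keeps $y$ off $\GS$. By Jordan-Brouwer, the properly embedded hypersurface $\GS$ separates $\rr^{m+1}$ into inner and outer components, so the connected set $\bb^{m+1}_R(y)$, which contains the exterior point $y$ and avoids $\GS$, lies entirely in the exterior of $\GO$.

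The only delicate step is the injectivity argument: one must ensure that both parameter values $(z,s)$ and $(q,\pm d)$ belong to the open tube of width $R$ where $F$ is one-to-one. This is secured by the strict inequality $d \leq s < R$, which places both values strictly inside the diffeomorphism domain; everything else is elementary continuity, the triangle inequality, and the topological separation property of $\GS$.
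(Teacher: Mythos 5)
Your overall strategy---directly exhibiting the candidate ball $\bb^{m+1}_{R}(z+R\nu)$ and then checking tangency and inclusion---is a legitimate alternative to the paper's argument, which instead grows exterior balls tangent at $z$ and shows that at the critical radius $T(z)$ the center would be equidistant from two points of $\GS$, whence the ray of the normal neighborhood at $z$ satisfies $R(z)\le T(z)$. Your concluding topological step (triangle inequality plus the separation property of $\GS$) is fine. The problem lies in the step you yourself flag as the crux.

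You derive $\dist(y_{s},\GS)=s$ by writing the nearest-point representation $y_{s}=q\pm d\,\nu(q)$ and ``invoking injectivity of $F$ on the width-$R$ tube''. But in your own setup $F$ is injective only on the variable-width tube $\{(p,t):|t|<R(p)\}$, and the hypothesis gives $R(z)\ge R$ while saying nothing about $R(q)$. The inequality $d\le s<R$ places $(z,s)$ in the domain of injectivity, but for $(q,\pm d)$ you would need $d<R(q)$, which is not known: $R(q)$ may be far smaller than $d$, in which case injectivity of $F$ on its domain is silent and you cannot conclude $q=z$. This is not a removable technicality under the diffeomorphism-only reading of ``regular normal neighborhood'': a distant sheet of $\GS$, carrying an arbitrarily thin normal tube, can approach the open normal segment at $z$ without meeting it, so that $R(z)\ge R$ is compatible with $\dist(z+s\nu,\GS)<s$; that sheet then enters the ball $\bb^{m+1}_{R}(z+R\nu)$ and the asserted inclusion fails. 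What is actually needed---and what the paper's proof uses when it concludes ``by definition'' that $R(z)\le T(z)$ as soon as the critical ball is tangent at a second point---is the metric content of the ray at $z$: every point $z+t\nu$ with $|t|<R(z)$ realizes its distance to $\GS$ at $z$. If you adopt that definition, your identity $\dist(y_{s},\GS)=s$ is immediate and the injectivity argument becomes superfluous; with the weaker definition you adopt, the step (and with it the statement) breaks down. Only under a uniform width $R(p)\ge R$ for all $p$, so that $d<R\le R(q)$, would your injectivity argument close, but the Fact, and the paper's Remark allowing a decaying ray $R(z)$, concern precisely the variable-width situation.
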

\begin{proof}
Indeed, start with a small ball $ \bb^{m+1}_{a}(y)$ touching $\GS$ only at $z$ and let the ray $a>0$ increase by keeping the same tangent property. The corresponding balls are all centered along the normal line to $\GS$ passing through $z$. Let $T(z) \in \rr_{>0}\cup\{+\infty\}$ denote the supremum of such rays and assume $T(z)<+\infty$, for otherwise there is nothing to prove. Then $ \bb^{m+1}_{T(z)}(y')$ is tangent to $\GS$ at  two distinct points $z,z' \in \GS$. The (closed) segments normal to $\GS$, of length $2T(z)$ and centered  respectively in $z$ and $z'$ meet precisely at the center $y'$ of the ball. Therefore, by definition, the ray $R(z)$ of the normal regular tubular neighborhood of $\GS$ at $z$ must satisfy $R(z) \leq T(z)$. The conclusion now follows trivially.
\end{proof}

\begin{proof}[Proof (of Lemma \ref{lemma-boundarygradient})]
We use the technique developed in \cite{Tr} (see also \cite[Chapter 14]{GT}). Keeping the notation introduced in the previous section, we let
\[
v = u-1.
\]
Thus, $v$ is a smooth solution of
\begin{equation}\label{dir-probl-v}
 \begin{cases}
 \Delta_{f} v = 0 & \textrm{in } \Omega\\
 v= -1 & \textrm{on } \GS_{1}\\
 v=0 & \textrm{on } \GS_{2}\\
-1<v<0 & \textrm{in }\GO.
\end{cases}
\end{equation}
Fix $z \in \GS_{2}$. Let $\bb_{R}^{m+1}(y) \subset \rr^{m+1} \setminus \bar\GO$ be an exterior ball tangent to $\GS_{2}$ at $z$. We assume that
\[
\partial \bb^{m+1}_{R+a}(y) \cap \GS_{1} = \emptyset,
\]
for a suitable $a>0$. Namely, we choose
\[
0<a<\dist_{\rr^{m+1}}(z,\GS_{1}).
\]
Define the domain
\[
W_{R,a} = \left( \bb^{m+1}_{R+a}(y) \setminus \bar\bb^{m+1}_{R}(y) \right)\cap \GO.
\]
Let
\[
r(x) = |x-y| \quad \text{and}\quad d(x) = r(x) -R
\]
so that
\[
d(x) = \dist_{\rr^{m+1}}(x,\partial \bb^{m+1}_{R}(y)).
\]
We construct a smooth function
\[
\psi(d): [0,a] \to [0,+\infty)
\]
satisfying the following conditions\smallskip
\begin{itemize}
 \item [i)] $\Delta_{f}\psi (d(x)) \leq 0$ on $W_{R,a}$;\smallskip
 \item [ii)] $\psi(0) = 0$;\smallskip
 \item [iii)] $\psi(a) =1$;\smallskip
 \item [iv)] $\psi' >0$.\smallskip
\end{itemize}
To this aim, note that, if iv) is satisfied, then
\begin{align*}
 \Delta_{f}\psi &= \psi'' + \psi' \left\{ \frac{m}{d+R} - \langle x, \nabla r \rangle \right\}\\
 &\leq \psi'' + \psi' \left\{ \frac{m}{d+R} - |x-y| + |y - z| + |z| \right\}\\
 &= \psi'' +\psi' \left\{ \frac{m}{d+R}- d + |z| \right\}.
\end{align*}
Therefore, we are led to impose
\[
\psi'' +\psi' \left\{ \frac{m}{d+R}- d + |z| \right\} = 0.
\]
Integrating on $[0,d]\subseteq [0,a]$ and recalling ii) we get
\[
\psi(d) = R^{m}\psi'(0) \int_{0}^{d} \frac{e^\frac{t^{2}}{2}dt}{(t+R)^{m}e^{|z|t} }.
\]
This definition satisfies i), ii) and iv) provided $\psi'(0)>0$. Finally, we impose the validity of iii). This implies the choice
\begin{equation}\label{psiprime(0)}
 \psi'(0) = \frac{1} {R^{m}\mathlarger\int_{0}^{a} \frac{e^\frac{t^{2}}{2}dt}{(t+R)^{m}e^{|z|t} }}
\end{equation}
and we conclude that, the desired function $\psi$ has the expression
\begin{equation*}\label{psi}
 \psi(d) = \frac{\mathlarger\int_{0}^{d} \frac{e^\frac{t^{2}}{2}dt}{(t+R)^{m}e^{|z|t} }} {\mathlarger\int_{0}^{a} \frac{e^\frac{t^{2}}{2}dt}{(t+R)^{m}e^{|z|t} }}
\end{equation*}
We observe explicitly from \eqref{psiprime(0)} that the following  rough estimate holds
\begin{equation}\label{psiprime(0)-estimate}
0<  \psi'(0) \leq C(R,a) e^{a|z|}
\end{equation}
where
\[
 C(R,a) = \frac{(R+a)^{m}}{R^{m}a}
\]
Summarizing we have obtained that:
\begin{equation*}
\begin{cases}
 \Delta_{f}\psi(d(x)) \leq 0, & \text{in }W_{R,a}\\
 \psi = 0 = v, & \text{at }z\\
 \psi = 1 > 0 \geq v, & \text{on }\partial W_{R,a} \cap \Omega\\
 \psi \geq 0 = v, &  \text{on }\partial \Omega \cap \bar{W}_{R,a} = \GS_{2} \cap \bar{W}_{R,a}.
\end{cases}
\end{equation*}
and, similarly,
\begin{equation*}
\begin{cases}
 \Delta_{f}(-\psi(d(x)))\geq 0, & \text{in }W_{R,a}\\
(- \psi) = 0 = v, & \text{at }z\\
( -\psi) = -1 \leq v, & \text{on }\partial W_{R,a} \cap \Omega\\
(- \psi) \leq 0 = v, &  \text{on }\partial \Omega \cap \bar{W}_{R,a} = \GS_{2} \cap \bar{W}_{R,a}.
\end{cases}
\end{equation*}
In view of \eqref{dir-probl-v}, \eqref{psiprime(0)-estimate} and the fact that $\frac{\partial v}{\partial \nu} = |\nabla u|$ on $\GS_{2}$, arguing as in \cite{Tr} we conclude that
\[
 |\nabla u|(z) \leq \psi'(0)\leq C(R,a) e^{a|z|}.
\]
This latter implies the validity of \eqref{uniformgradest} by choosing $a = \min(1,\dist_{\rr^{m+1}}(z,\GS_{1})-\epsilon)$ and letting $\epsilon \searrow 0$.
\end{proof}

\subsection{Proof of Lemma \ref{lemma-fharmfunction}}
Recall from Section \ref{section-uniqueness} and Section \ref{section-existencesolution} that, in any complete weighted manifold $M^{m+1}_{f}$ with $\ric_{f} \geq K^{2}>0$, the Dirichlet problem \eqref{DirichletProblem} has a unique solution $u \in \C^{\infty}(\bar \GO)$ satisfying $0 \leq u \leq 1$. Assume now that $M^{m+1}_{f}= \rr^{m+1}_{f}$ is the Gaussian soliton and that $\GS^{m}_{1},\GS^{m}_{2} \hookrightarrow \rr^{m+1}_{f}$ are $f$-minimal. Then, by Lemma \ref{lemma-caccioppoli}
\[
\int_{\GO} |\nabla u|^{2} dv_{f} \leq C_{1} \int_{\GS_{2}} |\nabla u| dv_{2;f_{2}}.
\]
On the other hand, in view of  the asymptotic distance condition  \eqref{AsymptHp} and the normal neighborhood condition \eqref{AsymptHp2}, from \eqref{uniformgradest} of Lemma \ref{lemma-boundarygradient} we know that
\[
|\nabla u (z) | \leq C_{2} e^{\tilde b |z|^{2}},\quad \text{on } \GS_{2}
\]
for some constants $C_{2}>0$ and $0< \tilde b<1/2$. Since $\GS_{2}$ is properly immersed, it has a polynomial extrinsic volume growth; see \eqref{polygrowth}. Recalling e.g. \cite[Lemma 25]{PiRi} we deduce that $|\nabla u| \in \L^{1}(\GS_{2},dv_{2;f_{2}})$ and, therefore, $|\nabla u| \in \L^{2}(\GO,dv_{f})$, as required. The proof of Lemma \ref{lemma-fharmfunction} is completed.

\section{Variational considerations and proof of Theorem \ref{th-positivedistance}}\label{section-variational}

\subsection{The finite \texorpdfstring{$f$}{f}-energy condition from a variational viewpoint}
There is (at least) an alternative way to deduce the  finiteness condition of the $f$-energy \eqref{L1-integrability} for the bounded solution of \eqref{DirichletProblem}. It relies on a variational argument that however seems to need the validity of the asymptotic distance assumption \eqref{AsymptHp} with the more demanding condition
\begin{equation}\label{AsymptHp3}
0\leq b<\frac{1}{4}.
\end{equation}
We briefly outline the argument.\smallskip

Let $\GS_{1} \cap \GS_{2} = \emptyset$ be properly embedded hypersurfaces that separate the complete ambient space $\left(M^{m+1}, \<\cdot,\cdot\>\right)$ and let $\GO$ be the enclosed region of $M$ so that $\partial \GO = \GS_{1} \cup \GS_{2}$. Consider the orthogonal projection
\begin{equation*}
\Pi_{1} : \bar \GO \to \GS_{1}
\end{equation*}
and note that
\begin{equation*}
\dist_{M}(z,\Pi_{1}(z)) = \dist_{M}(z,\GS_{1}). 
\end{equation*}
Now, let $\psi: \rr \to [0,1]$ be the Lipschitz function
\begin{equation}\label{defpsi}
\psi(t) =
\begin{cases}
 0 & t\leq 0\\
 t & 0 <t<1\\
 1 & t \geq 1.
\end{cases}
\end{equation}
Define a locally Lipschitz function $\Psi : \bar \GO \to [0,1]$ by
\begin{equation}\label{barrier}
 \Psi(z) = \psi \left( \frac{\dist_{M}(z,\GS_{1})}{\dist_{M}(\Pi_{1}(z),\GS_{2})} \right).
\end{equation}
Obviously,
\begin{equation}\label{Psi0}
\Psi \equiv 0\, \text{ on } \GS_{1}. 
\end{equation}
Morever, since
\begin{equation*}
 \dist_{M}(z,\GS_{1}) = \dist_{M}(\Pi_{1}(z), z) \geq  \dist_{M}(\Pi_{1}(z),\GS_{2}),\, \forall z\in\GS_{2},
\end{equation*}
we have 
\begin{equation}\label{Psi1}
 \Psi \equiv 1 \, \text{ on }\GS_{2}.
\end{equation}
Finally,
\begin{equation*}
\Lip[\Psi](z)\leq C\frac{1+ \Lip[\Pi_{1}](z)}{ \dist_{M}(\Pi_{1}(z),\GS_{2})},
\end{equation*}
for some constant $C>0$. This follows from the fact that distances are globally Lipschitz functions and, furthermore,
\begin{equation*}
\frac{\dist_{M}(z,\GS_{1})}{\dist_{M}(\Pi_{1}(z),\GS_{2})} > 1 \Rightarrow \Lip[\Psi](z) = 0.
\end{equation*}
Thus, if we now specify the situation to properly embedded self-shrinkers in  $\rr^{m+1}_{f}$ and we assume the validity of the asymptotic distance condition \eqref{AsymptHp} with $b$ satisfying \eqref{AsymptHp3}, then
\[
(a)\, \Psi\in \W^{1,2}(\Omega,dv_{f}),\quad (b)\, \Psi\equiv0 \text{ on }\Sigma_{1},\quad (c)\, \Psi\equiv1 \text{ on }\Sigma_{2}.
\]

Note now that the solutions $u_{k}$ to \eqref{MixedBndPb} over the exhaustion $\GO_{k} \nearrow \GO$ and constructed using Lieberman approach coincide with those obtained by applying the direct calculus of variations to the weighted energy functional
\[
E_{k,f}(v) = \tfrac{1}{2} \int_{\GO_{k}}|\nabla v|^{2}dv_{f}
\]
on the closed convex space
\[
\W^{1,2}_{\mathcal{D}}(\bar \GO_{k},dv_{f})=\left\{v\in \W^{1,2}(\GO_{k},dv_{f}):\,\left.v\right|_{\Sigma_{1}}\equiv0,\,\mathrm{and}\,\left.v\right|_{\Sigma_{2}}\equiv1\right\}.
\]
Here, Dirichlet data are understood in the trace sense. Thus, each $u_{k}$ is a minimizer of $E_{k,f}$ over $\W_{\CD}^{1,2}(\bar \GO_{k},dv_{f})$. Thanks to the global $\W^{1,2}$-regularity established in \cite[Proposition 1.2]{IPS}, this follows from \cite[Remark 1.3]{IPS} by a suitable choice of the domains $\GO$ and $D\Subset \GO$.\smallskip

With this preparation, let $\Psi_{k}=\left.\Psi\right|_{\Omega_{k}}$ be the restriction to $\Omega_{k} \nearrow \GO_{\infty} = \GO$ of the barrier function \eqref{barrier} and let $u_{k}$ be the solution of  problem \eqref{MixedBndPb}. Recall that, up to subsequences, $u_{k}$ $\C^{2}$-converges on compact subsets of $\GO$ to the bounded solution $u$ of \eqref{DirichletProblem}. Since $\Psi_{k}\in \W^{1,2,}_{\CD}(\bar \GO_{k},dv_{f})$, we deduce
\[
E_{k_{0},f}(u_{k}) \leq E_{k,f}(u_{k}) \leq E_{k,f}(\Psi_{k}) \leq E_{\infty,f}(\Psi) < +\infty
\]
for every $k_{0}<k$. Whence, recalling that $\nabla u_{k} \to \nabla u$ uniformly on compact subsets of $\bar \GO$ and  using Fatou lemma, we conclude that  $|\nabla u|\in \L^{2}(\Omega, dv_{f})$. 

\subsection{Proof of Theorem \ref{th-positivedistance}}
If on the one hand the previous arguments require a more stringent condition on the asymptotic behaviour of $\GS_{1}$ and $\GS_{2}$ and, thus, cannot be used to recover Theorem \ref{theorem-intersection}, on the other hand they suggest a way to obtain the intersection property when no extrinsic condition on $\GS_{2}$ is imposed. Indeed, note that if the two $f$-minimal hypersurfaces $\GS^{m}_{1}, \GS^{m}_{2}$ of $M^{m+1}_{f}$ are a positive distance apart (but their positive distance could be realized at infinity), then we can consider the function defined by
\[
\Psi(z)=\psi\left(\frac{\mathrm{dist}_M(z,\Sigma_1)-\mathrm{dist}_M(z,\Sigma_2)+\mathrm{dist}_M(\Sigma_1,\Sigma_2)}{2\mathrm{dist}_M(\Sigma_1,\Sigma_2)}\right),
\] 
with $\psi$  as in \eqref{defpsi}. It is easy to check that $\Psi$ is a Lipschitz function satisfying conditions \eqref{Psi0} and \eqref{Psi1}. In particular, $\Psi$ can be used as a global barrier function and reasoning as in the previous subsection we deduce that, without any assumption on the extrinsic geometry of the hypersurfaces, the bounded solution $u$ of \eqref{DirichletProblem} satisfies $|\nabla u|\in \L^{2}(\Omega, dv_{f})$. When combined with Theorem \ref{th-abstract} and Lemma \ref{lemma-fharmfunction} this 
 fact proves Theorem \ref{th-positivedistance}.

\begin{acknowledgement*}
The authors would like to thank Carlo Mantegazza and Luciano Mari for their interest in this work and useful comments on a previous version of the paper. They are also indebted to the anonymous referee for a careful reading and for several suggestions that improved the presentation of the paper. The first author is partially supported by INdAM-GNSAGA. The second and third authors acknowledge partial support by INdAM-GNAMPA.
\end{acknowledgement*}

\bibliographystyle{amsplain}
\bibliography{FrankelSS}

\end{document}